\newtheorem{theorem}{Theorem}[section]
\newtheorem{lemma}[theorem]{Lemma}
\newtheorem{definition}[theorem]{Definition}
\def\Z{{\mbox{\rm\kern.25em
\vrule width.03em height0.57ex depth0ex
\kern.033em
\vrule width.03em height1.52ex depth-0.96ex \kern-.338em Z}}}
\def\R{{\mbox{\rm I\kern-.22em R}}}
\def\N{{\mbox{\rm I\kern-.22em N}}}
\def\D{{\bf D}}
\def\T{{\bf T}}
\def\supp{{\rm supp}}
\def\sgn{{\rm sgn}}
\def\n{{\bf n}}
\def\D{{\cal{D}}}
\def\I{{\cal{I}}}
\def\dist{{\rm dist}}
\def\111{\gamma}
\def\be#1{\begin{equation}\label{#1}}
\def\bas{\begin{align*}}
\def\eas{\end{align*}}
\def\bi{\begin{itemize}}
\def\ei{\end{itemize}}
\newenvironment{proof}{\noindent {\bf Proof} }{\endprf\par}
\def \endprf{\hfill  {\vrule height6pt width6pt depth0pt}\medskip}
\def\emph#1{{\it #1}}
\title[Calder\'{o}n commutators and Cauchy integral I]{Calder\'{o}n commutators and the Cauchy integral on Lipschitz curves revisited I. First commutator and generalizations}
\author{Camil Muscalu}
\address{Department of Mathematics, Cornell University, Ithaca, NY 14853}
\email{camil@@math.cornell.edu}
\begin{document}

\begin{abstract}
This article is the first in a series of three papers, whose scope is to give new proofs to the well known theorems of Calder\'{o}n, Coifman, McIntosh and Meyer \cite{calderon}, \cite{meyerc}, \cite{cmm}. Here we treat the case of
the first commutator of Calder\'{o}n and some of its generalizations.
\end{abstract}

\maketitle

\section{Introduction}

This is the first paper in a sequel of three, whose aim is to give new proofs to the well known theorems of Calder\'{o}n, Coifman, McIntosh and Meyer \cite{calderon}, \cite{meyerc}, \cite{cmm}, which established $L^p$ estimates for the so called Calder\'{o}n
commutators and the Cauchy integral on Lipschitz curves. 

We refer the reader to the book of Coifman and Meyer \cite{meyerc} for a description of the history of these fundamental analytical objects, the role they play in analysis and the various
methods that have been further developed to understand these operators, since the appearance of the original articles. 

Other expository papers where some of these results are described and connected with other parts of mathematics, 
are the proceedings of the plenary talks at the ICM 1974 in Vancouver and the
ICM 1978 in Helsinki, given by Fefferman \cite{fefferman} and Calder\'{o}n \cite{calderon1}.

Our approach will also turn out to be flexible and generic enough, to allow us to generalize these classical results in various new ways.

The first paper describes the case of the first commutator and its generalizations, the second one the case of the Cauchy integral on Lipschitz curves and its generalizations
and finally, the third will be devoted to the extension of all these results to the multi-parameter setting of polydiscs of arbirary dimension, solving completely along the way an open question of Coifman from the early eighties.

We naturally start with the first commutator.

Given a Lipschitz function $A$ on the real line (so $A':= a\in L^{\infty}(\R)$) one formally defines the linear operator $C_1(f)$ by the formula

\begin{equation}\label{1}
C_1(f)(x) = p.v. \int_{\R}\frac{A(x)-A(y)}{(x-y)^2} f(y) dy
\end{equation}
where the meaning of the principal value integral is

\begin{equation}\label{2}
\lim_{\epsilon \rightarrow 0} \int_{\epsilon < |x-y| < 1/\epsilon }\frac{A(x)-A(y)}{(x-y)^2} f(y) dy
\end{equation}
whenever the limit exists. This is the so called first commutator of Calder\'{o}n. Note that the simplest particular case is obtained when $A(x)=x$ and $C_1(f)$ becomes the classical Hilbert transform.

Let us observe that when $a$ and $f$ are Schwartz functions, then (\ref{2}) makes perfect sense.

Indeed, for a fixed $\epsilon > 0$, one can rewrite the corresponding expression in (\ref{2}) as 

$$-\int_{\epsilon < |t| < 1/\epsilon }\frac{A(x+t)-A(x)}{t^2}f(x+t) dt = 
-\int_{\epsilon < |t| < 1/\epsilon }\left[\frac{A(x+t)-A(x)}{t}\right]f(x+t) \frac{dt}{t} = $$

\begin{equation}\label{3}
-\int_{\epsilon < |t| < 1/\epsilon }\left[\int_0^1 a(x+\alpha t) d \alpha \right] f(x+t) \frac{dt}{t}.
\end{equation}
Then, write $a$ and $f$ as 

$$a(x+\alpha t) = \int_{\R}\widehat{a}(\xi_1) e^{2\pi i (x+\alpha t)\xi_1} d\xi_1$$
and

$$f(x+t) = \int_{\R}\widehat{f}(\xi) e^{2\pi i (x+t) \xi} d\xi.$$
Using these formulas in (\ref{3}), the expression becomes

\begin{equation}\label{4}
-\int_{\R^2} m_{\epsilon}(\xi, \xi_1) \widehat{f}(\xi) \widehat{a}(\xi_1) e^{2\pi i x (\xi +\xi_1)} d\xi d \xi_1
\end{equation}
where

$$m_{\epsilon}(\xi, \xi_1) = \int_0^1 \int_{\epsilon < |t| < 1/\epsilon }\frac{1}{t} e^{2\pi i t (\xi +\alpha\xi_1)} d t d \alpha$$
which is known to converge uniformly to

$$-\int_0^1 \sgn(\xi +\alpha\xi_1) d\alpha.$$
In particular, the dominated convergence theorem implies that the limit as $\epsilon \rightarrow 0$ exists in (\ref{4}) and it is equal to

\begin{equation}\label{5}
\int_{\R^2}
\left[\int_0^1 \sgn(\xi +\alpha\xi_1) d\alpha\right]
\widehat{f}(\xi) \widehat{a}(\xi_1) e^{2\pi i x (\xi +\xi_1)} d\xi d \xi_1.
\end{equation}
Because of this formula, one can think of $C_1$ as being a bilinear operator in $f$ and $a$ and we will denote it from now on with $C_1(f, a)$. The following theorem of Calder\'{o}n is classical \cite{calderon}.

\begin{theorem}\label{unu}
For every $A'=a\in L^{\infty}$, the operator $C_1$ extends naturally as a bounded linear operator from $L^p(\R)$ into $L^p(\R)$ for every $1<p<\infty$, satisfying

\begin{equation}\label{6}
\|C_1(f, a)\|_p \lesssim \|a\|_{\infty}\cdot\|f\|_p.
\end{equation} 
\end{theorem}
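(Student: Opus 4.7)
The plan is to view $C_1$ through the Fourier multiplier formula (\ref{5}) and decompose its symbol
$$
m(\xi,\xi_1)=\int_0^1\sgn(\xi+\alpha\xi_1)\,d\alpha
$$
according to its two singular lines $\{\xi=0\}$ and $\{\xi+\xi_1=0\}$. The symbol is bounded by $1$, continuous, and smooth away from these lines, so a Whitney-type decomposition adapted to both will split $C_1$ naturally into three regimes: (i)~$|\xi_1|\ll|\xi|$, (ii)~$|\xi|\ll|\xi_1|$, and (iii)~$|\xi|\sim|\xi_1|$. Concretely, I would introduce smooth Littlewood--Paley cutoffs $\widehat{\psi_k}$ supported in $|\xi|\sim 2^k$ and write
$$
m(\xi,\xi_1)=\sum_{k,k'\in\Z} m(\xi,\xi_1)\,\widehat{\psi_k}(\xi)\,\widehat{\psi_{k'}}(\xi_1),
$$
grouping the pieces according to the three regimes.

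In regime (i) one has $\sgn(\xi+\alpha\xi_1)=\sgn(\xi)$ uniformly in $\alpha\in[0,1]$, so $m\equiv\sgn(\xi)$ on the support of the cutoffs and the sum assembles into a Coifman--Meyer paraproduct $\Pi(Hf,a)$ with $a$ as the low-frequency factor; classical paraproduct theory then gives $\|\Pi(Hf,a)\|_p\lesssim\|Hf\|_p\|a\|_\infty\lesssim\|f\|_p\|a\|_\infty$. In regime (ii), the change of variable $\beta=\xi+\alpha\xi_1$ shows that $m(\xi,\xi_1)=\sgn(\xi_1)+O(|\xi|/|\xi_1|)$, so the contribution factors as a paraproduct $\Pi(f,Ha)$ plus a smoother remainder; the bound follows from $\|Ha\|_{\mathrm{BMO}}\lesssim\|a\|_\infty$ together with the $L^p\times\mathrm{BMO}\to L^p$ mapping property of paraproducts.

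The hard part is regime (iii). Here $|\xi|\sim|\xi_1|\sim 2^k$ but $|\xi+\xi_1|$ may take any value $\leq 2^k$, so the output frequency is not pinned down, no smoothing is available, and the dyadic pieces do not assemble into a single paraproduct. I would dualize against $g\in L^{p'}$ and discretize by expanding $f$, $a$, $g$ in smooth wave packets $\phi_{P_j}$ indexed by tri-tiles $\vec P=(P_1,P_2,P_3)$ sharing a common dyadic spatial interval $I_{\vec P}$, with frequency intervals of length $|I_{\vec P}|^{-1}$ satisfying the standard sum-zero relation among their centers. This reduces matters to estimating a discrete trilinear sum
$$
\sum_{\vec P}\frac{1}{|I_{\vec P}|^{1/2}}\,|\langle f,\phi_{P_1}\rangle|\,|\langle a,\phi_{P_2}\rangle|\,|\langle g,\phi_{P_3}\rangle|,
$$
which I would control by size/energy stopping-time arguments: organize the tiles into forests at each dyadic size level, bound each forest by a product of sizes and energies, and sum a geometric series over the levels.

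The chief obstacle throughout is that $a$ lies only in $L^\infty$, not in any $L^p$, so every appearance of $a$ must be absorbed into $\|a\|_\infty$ with no loss. This forces the use of $\mathrm{BMO}$ estimates in the off-diagonal regimes and of the sharp $L^\infty$-adapted tile size $\sup_{\vec P}|I_{\vec P}|^{-1/2}|\langle a,\phi_{P_2}\rangle|\lesssim\|a\|_\infty$ in the diagonal regime. Once all three pieces are bounded by $\|a\|_\infty\|f\|_p\|g\|_{p'}$, duality yields~(\ref{6}).
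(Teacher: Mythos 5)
Your decomposition into the three frequency regimes matches the paper's, and your treatment of regime (i) ($|\xi_1|\ll|\xi|$, where the symbol is literally $\sgn(\xi)$ and the Coifman--Meyer theorem applies) is the same as the paper's treatment of its term (\ref{14}). But there is a genuine gap in both of the hard regimes, and it is the same gap. In regime (ii), writing $m(\xi,\xi_1)=\frac{|\xi+\xi_1|-|\xi|}{\xi_1}=\sgn(\xi_1)+O(|\xi|/|\xi_1|)$ is correct, but the remainder (for $\xi_1>0$ it equals $(\xi-|\xi|)/\xi_1$) is \emph{not} smoother: it is merely Lipschitz across $\xi=0$, its second $\xi$-derivative is a multiple of $\delta_0(\xi)/\xi_1$, and so it fails the Marcinkiewicz--H\"{o}rmander--Mihlin condition at second order. ``Classical paraproduct theory'' therefore does not apply to it; this remainder is precisely one of the two terms the paper has to work for. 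Similarly, in regime (iii) you assert that the dyadic pieces ``reduce to'' a discrete trilinear sum over tri-tiles, but the reduction from a symbol that is only continuous along $\xi+\xi_1=0$ to a discrete model with summable coefficients is exactly the crux, and you have not supplied it. (As an aside, $\xi+\xi_1=0$ is the degenerate direction, so the modulation-invariant size/energy machinery of the bilinear Hilbert transform is not the natural tool here, and by itself it would not resolve the non-smoothness of the symbol anyway.)

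The missing idea is quantitative. The paper expands the smooth restriction of the symbol to each Whitney rectangle in a double Fourier series $\sum_{n,n_1}C^k_{n,n_1}e^{2\pi i n\xi/2^k}e^{2\pi i n_1\xi_1/2^k}$ and proves (Lemma \ref{coef}, via the distributional identities of Lemma \ref{derivative}) that the coefficients decay at least quadratically in the indices, uniformly in $k$. Each Fourier mode turns the dyadic piece into a shifted paraproduct of the form (\ref{11}), with wave packets translated by $n$ and $n_1$ units of their own length; the paper then proves (Theorem \ref{discrete}, using logarithmic bounds for shifted maximal and shifted square functions) that these model operators have norms growing only like $\log<n>\cdot\log<n_1>$. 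Quadratic decay of the coefficients against logarithmic growth of the operator norms is what makes the sum converge. Your proposal contains neither half of this trade-off, so neither hard regime closes as written. Your idea of dualizing against $g$ while keeping $a$ as an input with an $L^\infty$-adapted size is a reasonable alternative to the paper's device of estimating $C_1^{\ast 2}:L^p\times L^{p'}\to L^1$ instead, but it does not substitute for the missing coefficient and shift analysis.
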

At this point, one should observe that the symbol of (\ref{5}) given by

\begin{equation}\label{symbol}
(\xi, \xi_1) \rightarrow \int_0^1 \sgn(\xi +\alpha\xi_1) d\alpha
\end{equation}
is not a Marcinkiewicz-H\"{o}rmander-Mihlin symbol \cite{stein} and as a consequence, the Coifman-Meyer theorem on paraproducts \cite{meyerc} cannot be applied. More precisely, one can see that away from the lines
$\xi=0$ and $\xi+\xi_1=0$, the symbol (\ref{symbol}) is many times differentiable and behaves like a classical symbol, but along them it is only continuous. The observation on which our approach is based, is that in spite of
this lack of differentiability, when one {\it smoothly} restricts (\ref{symbol}) to an arbitrary Whitney square with respect to the origin
\footnote{These are squares whose sides are parallel to the coordinate axes and whose distance to the origin is comparable to their sidelengths. }, the Fourier coefficients of the corresponding function decay at least quadratically. This fact (which will be proved
carefully later on) will reduce the problem to the one of proving estimates for the associated bilinear operators, which do not grow too fast with respect to the indices of the Fourier coefficients. We will see that this upper bounds can grow at most 
logarithmically, which will be more than enough to make the final power series convergent. This is, in just a few words, our strategy of the proof.

\begin{figure}[htbp]\centering
\psfig{figure=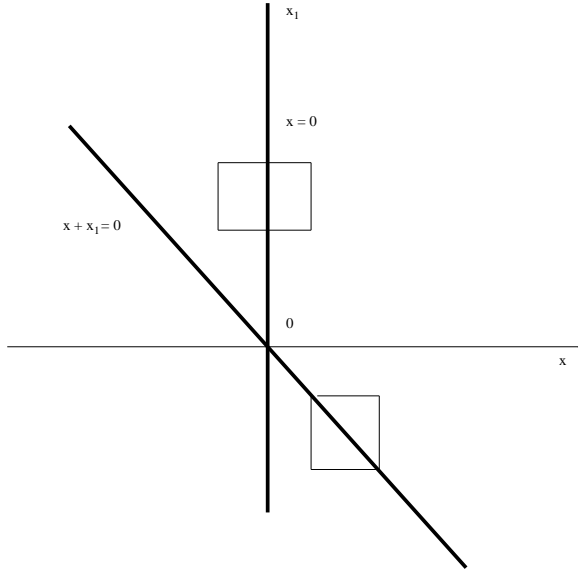, height=3in, width=3in}
\caption{The singularities of the symbol of the first commutator}
\label{fig1}
\end{figure}

Before going any further, let us also remark that if one permutes the two integrations in (\ref{5}), one can rewrite that expression as

$$\int_0^1 BHT_{\alpha}(f, a) (x) d \alpha$$
where $BHT_{\alpha}$ is the so called bilinear Hilbert transform of parameter $\alpha$. An alternative approach to the first commutator (suggested by Calder\'{o}n), was to prove $L^p\times L^{\infty} \rightarrow L^p$ estimates for these
operators, with implicit constants that are integrable or even uniform in $\alpha$. Estimates for the bilinear Hilbert transform have been first proved by Lacey and Thiele in \cite{lt1} and \cite{lt2},  and uniform estimates have been later on 
obtained by Thiele \cite{thiele}, Grafakos and Li \cite{gli} and Li \cite{li}. It is also interesting to remark that it is not yet known whether such an approach would work for 
the second Calder\'{o}n commutator \footnote{The second commutator can similarly be seen as a trilinear operator with symbol $\int_{[0,1]^2}\sgn(\xi + \alpha \xi_1 + \beta \xi_2) d \alpha d \beta$.}
which this time can be written as

$$\int_{[0,1]^2} THT_{\alpha, \beta} (f, a, a)(x) d \alpha d \beta.$$
A recent result of Palsson \cite{eyvi} proved many estimates for the operator $\int_0^1 THT_{\alpha, \beta} d \alpha$ ($\beta$ is fixed now), but so far there are no $L^p$ estimates available (uniform or not) for the corresponding trilinear 
operator $THT_{\alpha, \beta}$ which has been named by several authors, the trilinear Hilbert transform.

Now coming back to (\ref{6}), in order to describe the way in which $C_1(f,a)$ can be defined for any $a\in L^{\infty}$ and any $f\in L^p(\R)$, we need to say a few words about adjoints of bilinear operators.

If $m(\xi_1,\xi_2)$ is a bounded symbol, denote by $T_m(f_1, f_2)$ the bilinear operator given by

\begin{equation}\label{7}
T_m(f_1, f_2)(x) = \int_{\R^2} m(\xi_1, \xi_2)
\widehat{f_1}(\xi_1)
\widehat{f_2}(\xi_2)
e^{2\pi i x(\xi_1 + \xi_2)}
d\xi_1
d\xi_2,
\end{equation}
for $f_1, f_2$ Schwartz functions. Associated with it is the trilinear form $\Lambda(f_1, f_2, f_3)$ defined by

$$\Lambda(f_1, f_2, f_3) = \int_{\R} T_m(f_1, f_2)(x) f_3(x) d x,$$
again for $f_1, f_2, f_3$ Schwartz functions.

There are two adjoint operators $T_m^{\ast 1}$ and $T_m^{\ast 2}$ naturally defined by the equalities

$$\int_{\R}
T_m^{\ast 1}(f_2, f_3)(x) f_1(x) d x = \Lambda (f_1, f_2, f_3)
$$
and 

$$\int_{\R}
T_m^{\ast 2}(f_1, f_3)(x) f_2(x) d x = \Lambda (f_1, f_2, f_3)
$$
respectively. It is very easy to observe that both of them are also bilinear multipliers whose symbols are $m(-\xi_1-\xi_2, \xi_2)$ and $m(\xi_1,-\xi_1-\xi_2)$ respectively.

Now, if $a$  and $f$ are Schwartz functions, the inequality (\ref{6}) is equivalent to

\begin{equation}\label{8}
\left|\int_{\R}C_1(f, a)(x) g(x) d x \right| \lesssim \|a\|_{\infty}\cdot \|f\|_p\cdot \|g\|_{p'}
\end{equation}
for any Schwartz function $g$, where $p'$ is the dual index of $p$ (so $1/p + 1/p' = 1$). We also know from the above that

\begin{equation}\label{9}
\int_{\R}C_1(f, a )(x) g(x) d x = \int_{\R}C_1^{\ast 2}(f , g )(x) a(x) d x.
\end{equation}
We are going to prove in the rest of the paper that

\begin{equation}\label{10}
\|C_1^{\ast 2}(f , g )\|_1 \lesssim \|f\|_p\cdot \|g\|_{p'} 
\end{equation}
for any Schwartz functions $f, g$ and this shows that $C_1^{\ast 2}$ can be extended by density to the whole $L^p\times L^{p'}$. But then, this means that the right hand side of (\ref{9}) makes sense for any
$a\in L^{\infty}$, not only for bounded Schwartz functions, and this suggests to extend $C_1(f, a)$ by duality. More specifically, for $f\in L^p$ and $a\in L^{\infty}$, one can define $C_1(f, a)$ to be the unique
$L^p$ function satisfying (\ref{9}) for any $g\in L^{p'}$.

This discussion also proves that to demonstrate Theorem \ref{unu}, we only need to prove (\ref{10}). The idea now is to discretize $C_1^{\ast 2}$ and reduce (\ref{10}) to a discrete finite model.

{\bf Acknowledgement:} We wish to thank Eli Stein who after a talk we gave in Pisa, kindly pointed out to us that the maximal Theorem \ref{maximal} that will enter the picture later on, was actually known
and can be found in \cite{stein} Chapter II.

The present work has been partially supported by the NSF. 
\section{Reduction to a finite localized model}

We start with some standard notations and definitions. An interval $I$ on the real line $\R$ is called dyadic if it is of the form $I=[2^k n, 2^k (n+1)]$ for some $k, n\in\Z$. We will denote by
$\D$ the set of all such dyadic intervals.

If $I\in\D$, we say that a smooth function $\Phi_I$ is a bump adapted to $I$ if and only if one has

$$|\partial^{\alpha}(\Phi_I)(x)|\leq C_{\alpha, N}\cdot \frac{1}{|I|^{\alpha}}\cdot\frac{1}{\left(1+\frac{\dist(x, I)}{|I|}\right)^N}$$
for every integer $N$ and sufficiently many derivatives $\alpha$, where $|I|$ is the length of $I$. If $\Phi_I$ is a bump adapted to $I$, we say that $|I|^{-1/p}\Phi_I$ is an $L^p$ - normalized bump
adapted to $I$, for $1\leq p\leq\infty$. Also, if $I\in\D$ and $\n\in\Z$ we denote by $I_{\n}$ the new dyadic interval $[2^k (n-\n), 2^k (n+1-\n)]$ sitting $\n$ units of length $|I|$ away from $I$.

\begin{definition}\label{fi}
A sequence of $L^2$ - normalized bumps $(\Phi_I)_I$ adapted to dyadic intervals $I$ is said to be of  $\phi$ type, if and only if for each $I$ there exists an interval $\omega_I (=\omega_{|I|})$ symmetric
with respect to the origin, so that $\supp \widehat{\Phi_I}\subseteq \omega_I$ and with $|\omega_I|\sim |I|^{-1}$.
\end{definition}

\begin{definition}\label{psi}
A sequence of $L^2$ - normalized bumps $(\Phi_I)_I$ adapted to dyadic intervals $I$ is said to be of $\psi$ type, if and only if for each $I$ there exists an interval $\omega_I (=\omega_{|I|})$
 so that $\supp \widehat{\Phi_I}\subseteq \omega_I$ and with $|\omega_I|\sim |I|^{-1} \sim \dist (0, \omega_I)$.
\end{definition}

Fix now $\n_1, \n_2$ two integers and $\I\subseteq\D$ a finite arbitrary collection of dyadic intervals. Consider also three sequences of $L^2$ - normalized bumps
$(\Phi^1_{I_{\n_1}})_{I\in\I}$, $(\Phi^2_{I_{\n_2}})_{I\in\I}$, $(\Phi^3_I)_{I\in\I}$ adapted to $I_{\n_1}$, $I_{\n_2}$ and I respectively, such that at least two of them are of $\psi$ type.

The following theorem holds

\begin{theorem}\label{discrete}
The bilinear operator defined by

\begin{equation}\label{11}
T_{\I}(f, g):=
\sum_{I\in\I}
\frac{1}{|I|^{1/2}}
\langle f, \Phi^1_{I_{\n_1}} \rangle
\langle g, \Phi^2_{I_{\n_2}} \rangle
\Phi^3_I
\end{equation}
is bounded from $L^p\times L^q\rightarrow L^r$ for any $1 < p, q < \infty$ and $0<r<\infty$ so that $1/p+1/q=1/r$, with a bound  of type

$$O(\log <\n_1>  \log <\n_2>)$$
depending also implicitly on $p, q$ but independent on the cardinality of $\I$ and on the families of bumps considered (here $<\n>$ simply means $2+|\n|$). 

\end{theorem}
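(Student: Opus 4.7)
The plan is to pass to the trilinear form
\begin{equation*}
\form_{\I}(f,g,h) = \sum_{I \in \I} \frac{1}{|I|^{1/2}} \langle f, \Phi^1_{I_{\n_1}}\rangle \langle g, \Phi^2_{I_{\n_2}}\rangle \langle h, \Phi^3_I\rangle,
\end{equation*}
prove the restricted weak-type bound
\begin{equation*}
|\form_{\I}(f,g,h)| \lesssim \log\langle\n_1\rangle \log\langle\n_2\rangle \cdot |F|^{1/p}|G|^{1/q}|H|^{1/r'}
\end{equation*}
for $|f|\le\chi_F$, $|g|\le\chi_G$, $|h|\le\chi_{H'}$ with $H'$ a major subset of $H$, and then invoke standard multilinear Marcinkiewicz interpolation to recover the full range $1<p,q<\infty$, $0<r<\infty$, $1/p+1/q=1/r$. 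The adjoints of $\form_{\I}$ permute the three families (at worst relabeling the shifts), so I may arrange the at-most-one $\phi$-type family in whichever slot is most convenient.

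The core machinery will be shifted analogues of the sizes and energies of Lacey--Thiele and Muscalu--Tao--Thiele, denoted $\size^{\n}$ and $\energy^{\n}$, defined exactly like the classical ones but with each occurrence of $\chi_I$ replaced by (a smoothed version of) $\chi_{I_{\n}}$. The tree estimate
\begin{equation*}
|\form_T(f,g,h)| \lesssim |I_T| \cdot \size^{\n_1}_T(f) \cdot \size^{\n_2}_T(g) \cdot \size_T(h)
\end{equation*}
then follows from the usual Cauchy--Schwarz/orthogonality computation inside a tree; the shifts contribute only polynomial factors $\langle\n\rangle^{O(1)}$ that are absorbed by the rapid decay of the $L^2$-normalized adapted bumps.

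The decisive step, and the one I expect to be the main obstacle, is to control the shifted sizes and energies in terms of the ordinary $L^p$ norms of $f,g,h$ with a loss of no more than $\log\langle\n_j\rangle$ per shifted slot. For $\phi$-type families this reduces to the $L^p$-boundedness of the shifted Hardy--Littlewood maximal operator
\begin{equation*}
M^{\n} f(x) = \sup_{I\ni x}\frac{1}{|I|}\int_{I_{\n}}|f|
\end{equation*}
with norm $O(\log\langle\n\rangle)$ --- this is precisely the maximal theorem attributed to Stein in the acknowledgement. For $\psi$-type families one needs the analogous logarithmic $L^p$ bound for the shifted Littlewood--Paley square function, which requires splitting the shifted region $I_{\n}$ into $\sim\log\langle\n\rangle$ dyadic scales and applying vector-valued Calder\'{o}n--Zygmund theory carefully at each scale, being vigilant not to let any additional logarithm slip in.

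Once these shifted $L^p$ bounds are in place, the proof concludes by the standard stopping-time decomposition: iteratively extract maximal trees on which each of the three sizes is dyadically fixed, use a John--Nirenberg-type inequality to count the total number of such trees in terms of the energies, then insert the tree estimate and sum the resulting geometric series. Exactly one logarithmic factor emerges from each of the two shifted slots, whereas the unshifted third slot contributes no loss, producing the announced $O(\log\langle\n_1\rangle\log\langle\n_2\rangle)$ bound uniformly in $|\I|$ and in the families of bumps.
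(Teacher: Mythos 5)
Your plan is correct in outline and rests on exactly the two analytic facts that carry the paper's own proof --- the $O(\log <\n>)$ bounds on $L^p$ for the shifted maximal operator $M^{\n}$ and for the shifted square function $S^{\n}$ --- but it is organized around a genuinely different framework. You import the size/energy/tree formalism of Lacey--Thiele and Muscalu--Tao--Thiele, whereas the paper exploits the fact that the model operator (\ref{11}) is of paraproduct type (no modulation invariance: at each scale the three frequency supports sit in fixed Whitney position), so no trees are needed. It instead runs the exceptional-set method of \cite{mptt:biparameter}, \cite{mptt:multiparameter}: build $\Omega$ from the level sets of $M^{\n_1}f$ and $S^{\n_2}g$ (and, crucially, of the whole shifted family $M^{\n_1-\#}f$, $S^{\n_2-\#}g$ over all $\#$, with thresholds growing like $2^{5d}$ for $2^d<|\#|\leq 2^{d+1}$), set $E'=E\setminus\Omega$, and then perform three \emph{independent} level-set stopping times on $M^{\n_1}f$, $S^{\n_2}g$, $S(h)$; the measure of the union of the intervals selected at heights $(l_1,l_2,l_3)$ is bounded three ways, by $2^{l_1 p}$, $2^{l_2 q}$ and $2^{l_3\alpha}$, and a convex combination of these exponents makes the triple geometric series converge. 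Specialized to a paraproduct, your sizes degenerate to localized shifted maximal averages and square-function values, and the John--Nirenberg/energy count becomes precisely these level-set measure bounds, so the two arguments are equivalent in substance; yours carries some unnecessary overhead, while the paper's is more elementary and is the one that generalizes to the multi-parameter setting announced in the introduction.

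Two cautions on your sketch. First, the claim that the single-tree estimate costs a factor $<\n>^{O(1)}$ which is then ``absorbed by the rapid decay of the bumps'' cannot be taken literally: a genuinely polynomial loss in $\n$ could never be absorbed into a final bound that is logarithmic in $\n$. The correct statement is that if the shifted size in slot $j$ is defined relative to $I_{\n_j}$, matching where $\Phi^j_{I_{\n_j}}$ is adapted, the tree (or single-stopping-time) estimate is lossless in $\n$, and the only logarithmic losses enter through the $L^p$ bounds on $M^{\n}$ and $S^{\n}$. Second, the intervals lying deep inside the exceptional set are not handled for free: because the bumps in slots $1$ and $2$ are shifted, controlling their contribution against $h=\chi_{E'}$ requires that the exceptional set already control \emph{all} the operators $M^{\n_1-\#}$ and their square-function analogues, at the price of $\log<\n_j-\#>\leq \log(<\n_j><\#>)$ per shift; this is the one genuinely delicate point in the construction of $\Omega$, and it is the step your outline glosses over.
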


As we will see, this Theorem \ref{discrete} lies at the heart of our estimates and in the rest of the section we will show how it implies the desired inequality (\ref{10}). The idea will be to discretize $C_1^{\ast 2}$ and show that it can be 
reduced to operators of type (\ref{11}). Equivalently, since they both have the same trilinear form, we will discretize $C_1$ instead. We start with two Littlewood-Paley decompositions and write

$$1(\xi) = \sum_{k_1} \widehat{\Psi_{k_1}}(\xi)$$
and

$$1(\xi_1) = \sum_{k_2} \widehat{\Psi_{k_2}}(\xi_1)$$
where as usual, $\widehat{\Psi_{k_1}}(\xi)$ and $\widehat{\Psi_{k_2}}(\xi_1)$ are supported in the regions $|\xi|\sim 2^{k_1}$ and $|\xi_1|\sim 2^{k_2}$ respectively. In particular, we get

\begin{equation}\label{12}
1(\xi, \xi_1) = \sum_{k_1, k_2} \widehat{\Psi_{k_1}}(\xi) \widehat{\Psi_{k_2}}(\xi_1).
\end{equation}
By splitting (\ref{12}) over the regions where $k_1<<k_2$, $k_2<<k_1$ and $k_1\sim k_2$ we obtain the final decomposition

\begin{equation}\label{13}
1(\xi,\xi_1) = \sum_k \widehat{\Phi_k}(\xi) \widehat{\Psi_k}(\xi_1) +
\end{equation}

\begin{equation}\label{14}
\sum_k \widehat{\Psi_k}(\xi) \widehat{\Phi_k}(\xi_1) +
\end{equation}

\begin{equation}\label{15}
\sum_{k_1\sim k_2} \widehat{\Psi_{k_1}}(\xi) \widehat{\Psi_{k_2}}(\xi_1).
\end{equation}

By inserting this into (\ref{5}), $C_1(f, a)$ splits as a sum of three different expressions. It is easy to see that the symbol of the one corresponding to (\ref{14}) is a classical symbol and for this part
the inequality (\ref{10}) follows from the Coifman-Meyer theorem on paraproducts \cite{meyerc}. We are thus left with understanding the other two terms. Notice that the first one (corresponding to (\ref{13})) interacts with the line
$\xi=0$, while the third one (corresponding to (\ref{15})) interacts with the line $\xi+\xi_1=0$ along which the original symbol

$$\int_0^1 \sgn(\xi+\alpha\xi_1) d \alpha$$
is only continuous. Also, for simplicity, from now on we will replace $\int_0^1 \sgn(\xi+\alpha\xi_1) d \alpha$ with $\int_0^1 1_{\R_+}(\xi+\alpha\xi_1) d \alpha$ since the difference of the corresponding operators
is just the product of $a$ and $f$ which clearly satisfies the original H\"{o}lder type inequalities.

Let us now fix a parameter $k\in\Z$ and consider the corresponding expressions (also, since $k_1\sim k_2$ we assume that they are equal, for simplicity). Their trilinear forms are given by

$$\int_{\xi+\xi_1+\xi_2=0}
\left[\int_0^1 1_{\R_+}(\xi+\alpha\xi_1) d \alpha\right]
\widehat{\Phi_k}(\xi) \widehat{\Psi_k}(\xi_1) \widehat{\Psi_k}(\xi_2)
\widehat{f}(\xi)
\widehat{g}(\xi_1)
\widehat{h}(\xi_2)
d\xi
d\xi_1
d\xi_2
$$
and

$$\int_{\xi+\xi_1+\xi_2=0}
\left[\int_0^1 1_{\R_+}(\xi+\alpha\xi_1) d \alpha\right]
\widehat{\Psi_k}(\xi) \widehat{\Psi_k}(\xi_1) \widehat{\Phi_k}(\xi_2)
\widehat{f}(\xi)
\widehat{g}(\xi_1)
\widehat{h}(\xi_2)
d\xi
d\xi_1
d\xi_2
$$
respectively. Clearly, the functions $\widehat{\Psi_k}(\xi_2)$ and $\widehat{\Phi_k}(\xi_2)$ have been inserted naturally into the above expressions (the first are supported away from zero while the support of the second contains the origin).

Now, on the support of $\widehat{\Phi_k}(\xi) \widehat{\Psi_k}(\xi_1)$, the function $\int_0^1 1_{\R_+}(\xi+\alpha\xi_1) d \alpha$ can be written as a double Fourier series of type

\begin{equation}\label{16}
\sum_{n,n_1} C^k_{n,n_1} e^{2\pi i \frac{n}{2^k}\xi} e^{2\pi i \frac{n_1}{2^k}\xi_1}.
\end{equation}
Similarly, on the support of $\widehat{\Psi_k}(\xi) \widehat{\Psi_k}(\xi_1)$ the same function can also be written as 

\begin{equation}\label{17}
\sum_{n,n_1} \widetilde{C^k_{n,n_1}} e^{2\pi i \frac{n}{2^k}\xi} e^{2\pi i \frac{n_1}{2^k}\xi_1}.
\end{equation}

The following Lemma will be crucial and gives upper bounds for these Fourier coefficients.

\begin{lemma}\label{coef}
One has

$$|C^k_{n,n_1}| \lesssim \frac{1}{<n>^2}\cdot\frac{1}{<n_1>^{\#}}$$
and also

$$|\widetilde{C^k_{n,n_1}}| \lesssim \frac{1}{<n>^2}\cdot\frac{1}{<n- n_1>^{\#}} + \frac{1}{<n>^{\#}}\cdot\frac{1}{<n_1>^{\#}}$$
for a fixed large integer $\#$, uniformly in $k$.
\end{lemma}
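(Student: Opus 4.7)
The plan is to rescale away the $k$-dependence and then to perform distributional integration by parts exploiting the piecewise-linear structure of
\[
F(\eta,\eta_1):=\int_0^1 1_{\R_+}(\eta+\alpha\eta_1)\,d\alpha.
\]
Substituting $\xi=2^k\eta,\ \xi_1=2^k\eta_1$, the $0$-homogeneity of $F$ makes each Fourier coefficient independent of $k$ up to a universal normalization: one is left with an integral
\[
\int_{\R^2}\phi(\eta)\,\psi(\eta_1)\,F(\eta,\eta_1)\,e^{-2\pi i(n\eta+n_1\eta_1)}\,d\eta\,d\eta_1,
\]
where $\phi,\psi$ are fixed smooth bumps independent of $k$ (the rescaled versions of $\widehat{\Phi_k},\widehat{\Psi_k}$). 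For $C^k_{n,n_1}$ the bump $\phi$ is supported near the origin and $\psi$ in $|\eta_1|\sim 1$; for $\widetilde{C^k_{n,n_1}}$ both bumps are supported in $|\cdot|\sim 1$.

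A direct computation shows that $F$ is continuous and piecewise linear in $\eta$ for fixed $\eta_1\neq 0$, with corners precisely on the two lines $\eta=0$ and $\eta+\eta_1=0$, and as a distribution
\[
\partial_\eta^2 F=-\frac{1}{\eta_1}\,\delta(\eta)+\frac{1}{\eta_1}\,\delta(\eta+\eta_1).
\]
Away from these creases and $\eta_1=0$, the function $F$ is smooth in $\eta_1$ on each sign-quadrant, so after restricting to a sign component of $\supp\phi\times\supp\psi$ the two variables decouple cleanly. For $C^k_{n,n_1}$, $|\eta|\ll 1\sim|\eta_1|$ on the support, so only the crease $\eta=0$ intersects the integration region. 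Two $\eta$-integrations by parts (in the distributional sense) produce the factor $(2\pi i n)^{-2}$ together with a boundary term $-\phi(0)\int_{\R}\psi(\eta_1)\eta_1^{-1}e^{-2\pi i n_1\eta_1}\,d\eta_1$ and regular contributions involving $\phi',\phi''$ and the bounded function $\partial_\eta F$. Each of these is a one-dimensional Fourier integral of a smooth compactly supported function of $\eta_1$, so arbitrarily many integrations by parts in $\eta_1$ gain the factor $\langle n_1\rangle^{-\#}$. For $|n|\lesssim 1$ the trivial $L^\infty$ bound together with $\eta_1$-IBPs alone already gives what is needed.

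For $\widetilde{C^k_{n,n_1}}$, split $\supp\psi\times\supp\psi$ into its four sign-components. On the positive-positive component $F\equiv 1$ and on the negative-negative component $F\equiv 0$; in either case $\psi(\eta)\psi(\eta_1)F(\eta,\eta_1)$ is smooth and compactly supported, so iterated IBPs in both variables yield $\langle n\rangle^{-\#}\langle n_1\rangle^{-\#}$, the second term of the tilde estimate. On the two opposite-sign components the crease $\eta+\eta_1=0$ lies inside the support; two $\eta$-IBPs extract its delta, leaving a one-dimensional integral in $\eta_1$ with oscillation $e^{-2\pi i(n_1-n)\eta_1}$, and further IBPs in $\eta_1$ give $\langle n-n_1\rangle^{-\#}$. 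Combined with the $n^{-2}$ prefactor, this is the first term.

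I expect the main obstacle to be carefully bookkeeping the distributional boundary terms at the creases so that no correction is lost, and verifying that the rescaled bumps $\phi,\psi$ form a uniformly bounded family independent of $k$; once those two points are in order, the decay estimates reduce to standard one-dimensional Fourier integration by parts on smooth compactly supported functions.
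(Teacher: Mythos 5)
Your proposal is correct and follows essentially the same route as the paper: rescale to remove the $k$-dependence, exploit the distributional identity $\partial_\eta^2 F=\frac{1}{\eta_1}\bigl(\delta_0(\eta+\eta_1)-\delta_0(\eta)\bigr)$ (the paper's Lemma on derivatives of the symbol), integrate by parts twice in $\eta$ to extract $\langle n\rangle^{-2}$ and collapse the delta contributions to one-dimensional integrals, then integrate by parts repeatedly in $\eta_1$, with the absence of the crease $\eta+\eta_1=0$ on the support of $C^k_{n,n_1}$ explaining the difference between the two bounds. The sign-quadrant splitting is a harmless reorganization of the paper's case analysis, and the bookkeeping of the non-delta terms you flag at the end is exactly the part the paper also handles by continuing to integrate by parts in $\xi_1$ via the remaining derivative identities.
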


We will prove this Lemma \ref{coef} at the end of this section. Roughly speaking, it shows that one has at least quadratic decay for all the Fourier coefficients. 

Now, (\ref{16}) produces expressions of type

$$\int_{\xi+\xi_1+\xi_2=0}
\left[\widehat{\Phi_k}(\xi) e^{2\pi i \frac{n}{2^k}\xi}\right] 
\left[\widehat{\Psi_k}(\xi_1) e^{2\pi i \frac{n_1}{2^k}\xi_1}\right]
\widehat{\Psi_k}(\xi_2)
\widehat{f}(\xi)
\widehat{g}(\xi_1)
\widehat{h}(\xi_2)
d\xi
d\xi_1
d\xi_2
$$
which can be rewritten as

$$\int_{\R}(f\ast \Phi^{1,n}_k)(x) (g\ast \Psi^{2, n_1}_k)(x) (h\ast \Psi^3_k)(x) d x$$
and this can be further discretized by standard arguments as an average of expressions of type

\begin{equation}\label{18}
\sum_{|I|= 2^{-k}}
\frac{1}{|I|^{1/2}}
\langle f, \Phi^1_{I_n} \rangle
\langle g, \Phi^2_{I_{n_1}} \rangle
\Phi^3_I,
\end{equation}
where the functions $\Phi^2_{I_{n_1}}$ and $\Phi^3_I$ are of $\psi$ type.

Similarly, (\ref{17}) produces expressions of type

$$\int_{\xi+\xi_1+\xi_2=0}
\left[\widehat{\Psi_k}(\xi) e^{2\pi i \frac{n}{2^k}\xi}\right] 
\left[\widehat{\Psi_k}(\xi_1) e^{2\pi i \frac{n_1}{2^k}\xi_1}\right]
\widehat{\Phi_k}(\xi_2)
\widehat{f}(\xi)
\widehat{g}(\xi_1)
\widehat{h}(\xi_2)
d\xi
d\xi_1
d\xi_2
$$
and as we have seen, these can be further rewritten and discretized again in the form (\ref{18}), where this time  $\Phi^1_{I_{n}}$ and $\Phi^2_{I_{n_1}}$ are of $\psi$ type.
The connection with (\ref{11}) should be clear by now. If we add all the expressions of type (\ref{18}) for all the scales $k\in\Z$, one obtains a discrete trilinear form corresponding to the part
of $C_1$ related to (\ref{13}) (and of course, as we discussed, there is a similar one related to (\ref{15})). In particular, since we are interested in estimating $C_1^{\ast 2}$, its bilinear model is of the form

$$
\sum_{I\in\I}
\frac{1}{|I|^{1/2}}
\langle f, \Phi^1_{I_{n}} \rangle
\langle h, \Phi^2_{I} \rangle
\Phi^3_{I_{n_1}}
$$
which should be rewritten as

$$
\sum_{I\in\I}
\frac{1}{|I|^{1/2}}
\langle f, \Phi^1_{I_{n-n_1}} \rangle
\langle h, \Phi^2_{I_{-n_1}} \rangle
\Phi^3_{I}
$$
to be able to compare it better with (\ref{11}).

Now, using the fact that $C_1^{\ast 2}(f,g)$ makes perfect sense for Schwartz functions (in fact, it can be written as an expression similar to (\ref{5})) and by triangle inequality, Fatou's lemma and Theorem \ref{discrete}, it follows
that

$$\|C_1^{\ast 2}(f,g)\|_1 \lesssim
\sum_{n,n_1}
\sup_k (|C^k_{n,n_1}|, |\widetilde{C^k_{n,n_1}}|)\cdot
\log <n-n_1>\cdot \log <n_1> \cdot \|f\|_p\cdot \|g\|_{p'}$$
which is clearly bounded by $\|f\|_p\cdot \|g\|_{p'}$ as a consequence of the quadratic decay in Lemma \ref{coef}.
This completes the proof of (\ref{10}).

We now describe the proof of Lemma \ref{coef}. We first record the following

\begin{lemma}\label{derivative}
One has the following identities

(a) $\partial_{\xi}^2\left(\int_0^{\xi_1} 1_{\R_+}(\xi + \alpha) d\alpha \right) = \delta_0(\xi +\xi_1) - \delta_0(\xi)$.

(b) $\partial_{\xi_1}^2\left(\int_0^{\xi_1} 1_{\R_+}(\xi + \alpha) d\alpha \right) = \delta_0(\xi +\xi_1)$.
 
(c) $\partial_{\xi}\partial_{\xi_1}\left(\int_0^{\xi_1} 1_{\R_+}(\xi + \alpha) d\alpha \right) = \partial_{\xi_1}\partial_{\xi}\left(\int_0^{\xi_1} 1_{\R_+}(\xi + \alpha) d\alpha \right) = \delta_0(\xi +\xi_1)$.
 
where $\delta_0$ is the Dirac distribution with respect to the origin.
\end{lemma}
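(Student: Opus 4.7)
The plan is to reduce everything to a one-variable computation by a change of variables. Set
$$F(\xi,\xi_1) := \int_0^{\xi_1} 1_{\R_+}(\xi+\alpha)\,d\alpha.$$
Substituting $\beta = \xi+\alpha$ gives $F(\xi,\xi_1) = \int_\xi^{\xi+\xi_1} 1_{\R_+}(\beta)\,d\beta$. Introducing the ramp function $H(t) := \int_{-\infty}^t 1_{\R_+}(\beta)\,d\beta = \max(t,0)$, one obtains the clean identity
$$F(\xi,\xi_1) = H(\xi+\xi_1) - H(\xi).$$
All three statements of the lemma should then follow from the standard distributional identities $H'(t) = 1_{\R_+}(t)$ and $H''(t) = \delta_0(t)$, applied via the chain rule.

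Concretely, for (a) I would differentiate once in $\xi$ to get $\partial_\xi F = 1_{\R_+}(\xi+\xi_1) - 1_{\R_+}(\xi)$, and then once more in $\xi$ to produce $\delta_0(\xi+\xi_1) - \delta_0(\xi)$. For (b) one differentiates in $\xi_1$ twice: the first derivative is $\partial_{\xi_1}F = 1_{\R_+}(\xi+\xi_1)$, and the second is $\delta_0(\xi+\xi_1)$; the $\xi$-piece is killed because $H(\xi)$ is independent of $\xi_1$. For (c), starting from $\partial_{\xi_1}F = 1_{\R_+}(\xi+\xi_1)$ and differentiating in $\xi$ yields $\delta_0(\xi+\xi_1)$; the reverse order of differentiation gives the same answer for the same reason, and since $F$ is continuous and the mixed partials agree as distributions, Schwarz's lemma in the distributional sense supplies equality of the mixed partials.

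There is essentially no obstacle here beyond bookkeeping; the only point that needs a brief justification is that all the derivatives are meant in the sense of distributions on $\R^2$, so one should verify (if desired) the identities above by pairing against a test function $\varphi(\xi,\xi_1)$ and integrating by parts, which reduces to the one-variable facts $\langle H'', \psi\rangle = \psi(0)$ for test functions $\psi$ on $\R$. This pairing argument also confirms that the two mixed partials in (c) coincide, since both evaluate against $\varphi$ to $\int_\R \varphi(\xi,-\xi)\,d\xi$.
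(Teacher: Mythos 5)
Your proof is correct and follows essentially the same route as the paper: a change of variables $\beta=\xi+\alpha$ reducing everything to the one-variable facts $\partial\, 1_{\R_+}=\delta_0$ (the paper differentiates first and then substitutes, you substitute first into the clean form $H(\xi+\xi_1)-H(\xi)$ and then differentiate, which is an inessential reordering). Your version is if anything slightly more complete, since the paper only verifies part (a) explicitly and leaves (b), (c) to the reader.
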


\begin{proof}

This is really straightforward. Let us verify (a) for example. One has

$$\partial_{\xi}^2\left(\int_0^{\xi_1} 1_{\R_+}(\xi + \alpha) d\alpha \right) =
\partial_{\xi} \left(\int_0^{\xi_1}\delta_0(\xi+\alpha) d \alpha\right) =
\partial_{\xi} \left(\int_{\xi}^{\xi+ \xi_1}\delta_0(\alpha) d \alpha\right) = \delta_0(\xi +\xi_1) - \delta_0(\xi).
$$

\end{proof}

To prove now the estimates in Lemma \ref{coef}, we rewrite (for instance) $\widetilde{C^k_{n,n_1}}$ as

$$\frac{1}{2^k}\frac{1}{2^k}
\int_{\R^2}
\left[\int_0^1 1_{\R_+}(\xi +\alpha \xi_1) d \alpha\right]
\widehat{\widetilde{\Psi_k}}(\xi)
\widehat{\widetilde{\Psi_k}}(\xi_1)
e^{- 2\pi i \frac{n}{2^k}\xi} e^{- 2\pi i \frac{n_1}{2^k}\xi_1}
d\xi 
d\xi_1 =
$$

$$\int_{\R^2}
\left[\int_0^1 1_{\R_+}(\xi +\alpha \xi_1) d \alpha\right]
\widehat{\widetilde{\Psi}}(\xi)
\widehat{\widetilde{\Psi}}(\xi_1)
e^{- 2\pi i n\xi} e^{- 2\pi i n_1 \xi_1}
d\xi 
d\xi_1 =
$$

$$
\int_{\R^2}
\left[\frac{1}{\xi_1}\int_0^{\xi_1} 1_{\R_+}(\xi +\alpha) d \alpha\right]
\widehat{\widetilde{\Psi}}(\xi)
\widehat{\widetilde{\Psi}}(\xi_1)
e^{- 2\pi i n\xi} e^{- 2\pi i n_1 \xi_1}
d\xi 
d\xi_1 :=
$$

\begin{equation}\label{23}
\int_{\R^2}
\left[\int_0^{\xi_1} 1_{\R_+}(\xi +\alpha) d \alpha\right]
\widehat{\widetilde{\Psi}}(\xi)
\widehat{\widetilde{\widetilde{\Psi}}}(\xi_1)
e^{- 2\pi i n\xi} e^{- 2\pi i n_1 \xi_1}
d\xi 
d\xi_1,
\end{equation}
where $\widehat{\widetilde{\Psi}}(\xi)$, $\widehat{\widetilde{\Psi}}(\xi_1)$ and $\widehat{\widetilde{\widetilde{\Psi}}}(\xi_1)$ are supported away from the origin and are adapted to scale $1$.

The idea is of course to integrate by parts as much as we can in (\ref{23}) and keep track of the upper bounds that one gets in this way. We begin integrating by parts in $\xi$ as much as we can.
Since both $\int_0^{\xi_1} 1_{\R_+}(\xi +\alpha) d \alpha$ and $\widehat{\widetilde{\Psi}}(\xi)$ depend on $\xi$, the $\xi$ derivatives can hit either of the terms. If the derivative hits
twice the term $\int_0^{\xi_1} 1_{\R_+}(\xi +\alpha) d \alpha$ then, because of the previous Lemma \ref{derivative}, the $\xi$ variable dissapears and becomes $-\xi_1$ (notice that $\xi$ cannot be zero in this case) at which point
(\ref{23}) gets simplified into an expression of type

$$\int_{\R}
\widehat{\widetilde{\Psi}}(-\xi_1)
\widehat{\widetilde{\widetilde{\Psi}}}(\xi_1)
e^{- 2 \pi i \xi_1 (n-n_1)}
d\xi_1.$$
But this term can be integrated by parts as many times as we wish and this explains the appearance of the first upper bound for $|\widetilde{C^k_{n,n_1}}|$. If on the contrary, the
$\xi$ derivative didn't hit the term $\int_0^{\xi_1} 1_{\R_+}(\xi +\alpha) d \alpha$ two times yet even after many integration by parts, this means that we already gained a factor of type $\frac{1}{<n>^{\#}}$
at which moment we stop integrating in $\xi$ and start integrating by parts in $\xi_1$. As before, if the $\xi_1$ derivatives hit the term $\int_0^{\xi_1} 1_{\R_+}(\xi +\alpha) d \alpha$
until one reaches $\delta_0(\xi+\xi_1)$ then $\xi_1$ becomes $-\xi$ and after that one integrates by parts a smooth function obtaining an upper bound of type 
$\frac{1}{<n>^{\#}}\frac{1}{<n - n_1>^{\#}} $ which is smaller than the previously discussed one.

If finally, the $\xi_1$ derivative didn't hit $\int_0^{\xi_1} 1_{\R_+}(\xi +\alpha) d \alpha$ until it becomes $\delta_0(\xi+\xi_1)$, then this means that it keeps hitting the smooth function of $\xi_1$ in which case we obtain
an upper bound of type $\frac{1}{<n>^{\#}}\frac{1}{<n_1>^{\#}} $ as desired.

The second term $C^k_{n, n_1}$ can be treated similarly. One should just remark that in this case the equality $\xi_1= - \xi$ is impossible and only $\delta_0(\xi)$ remains after integrating by parts, 
which explains the slight difference between the two upper bounds.

We are as a consequence left with proving Theorem \ref{discrete}.

\section{Proof of Theorem \ref{discrete}}

The proof is based on the method introduced in \cite{mptt:biparameter} and \cite{mptt:multiparameter}.

Let us assume without loss of generality that the families $(\Phi^2_{I_{\n_2}})_I$ and $(\Phi_I^3)_I$ are of  $\psi$ type (since all the other possible cases can be treated in a similar way).
Fix also $1 < p,q <\infty$ and $0<r<\infty$ so that $1/p+1/q=1/r$. We will prove that $T_{\I}$ maps $L^p\times L^q \rightarrow L^{r,\infty}$ since then (\ref{11}) follows easily by standard interpolation arguments.

As usual, it is enough to show that given a measurable set $E\subseteq \R$ with $|E|=1$, one can find $E'\subseteq E$ with $|E'|\sim 1$ and so that

\begin{equation}\label{24}
\sum_{I\in\I}
\frac{1}{|I|^{1/2}}
|\langle f, \Phi^1_{I_{\n_1}} \rangle|
|\langle g, \Phi^2_{I_{\n_2}} \rangle|
|\langle h, \Phi^3_I \rangle| \lesssim (\log <\n_1>) (\log <\n_2>) 
\end{equation}
where $h:= \chi_{E'}$.

Define now the shifted maximal operator $M^{\n_1}$ and the shifted square function $S^{\n_2}$ as follows.

$$M^{\n_1} f(x) := \sup_{x\in I}\frac{1}{|I|} \int_{\R} |f(y)| \widetilde{\chi_{I_{\n_1}}}(y) d y$$
where $\widetilde{\chi_{I_{\n_1}}}(y)$ denotes the function

$$ \widetilde{\chi_{I_{\n_1}}}(y) = \left(1 + \frac{\dist(y, I_{\n_1})}{|I_{\n_1}|}\right)^{- 100},$$
while $S^{\n_2}$ is given by

$$S^{\n_2} g(x) := \left(\sum_I \frac{|\langle g, \Phi^2_{I_{\n_2}}\rangle|^2}{|I|} 1_{I}(x) \right)^{1/2}.$$

As we will see later on, both of them are bounded on every $L^p$ space for $1 < p < \infty$, with a bound of type $O(\log <\n_1>)$ and $O(\log <\n_2>)$ respectively. Using these two facts we define an exceptional set as follows.

First, define the set $\widetilde{\Omega_0}$ by

$$\widetilde{\Omega_0} := \left\{ x : M^{\n_1} f(x) > C \log <\n_1>  \right\} \cup \left\{ x : S^{\n_2} f(x) > C \log <\n_2>  \right\}.$$

Let now $d$ a positive integer and $\#$ an integer so that $2^d < |\#|\leq 2^{d+1}$. Define the set $\Omega^d_{\#}$ by

$$\Omega^d_{\#} := \left\{ x : M^{\n_1-\#} f (x) > C \log <\n_1-\#> 2^{5 d} \right\}$$
and then the set $\widetilde{\widetilde{\Omega_0}}$ by

$$\widetilde{\widetilde{\Omega_0}} := \bigcup_{d\geq 0} \bigcup_{2^d < |\#|\leq 2^{d+1}}\Omega^d_{\#}.$$
Define also the set $\widetilde{\widetilde{\widetilde{\Omega_0}}}$ in a similar way to $\widetilde{\widetilde{\Omega_0}}$ but by using the function $g$ and the corresponding index $\n_2$ instead. Then, define $\Omega_0$ to be

\begin{equation}
\Omega_0 := \widetilde{\Omega_0} \cup \widetilde{\widetilde{\Omega_0}} \cup \widetilde{\widetilde{\widetilde{\Omega_0}}}
\end{equation}
and finally, the exceptional set

$$\Omega := \left\{ x : M(1_{\Omega_0})(x) > \frac{1}{100} \right\}.$$

Observe that $|\Omega| << 1$ is $C$ is chosen large enough and this allows us to define the set $E'$ by $E' := E\setminus \Omega$ and to observe that $|E'|\sim 1$ as desired.
To be able to estimate (\ref{24}) properly, we split is into two parts as follows:

\begin{equation}
\sum_{I\cap \Omega^c\neq \emptyset}  + \sum_{I\cap \Omega^c = \emptyset} := I + II.
\end{equation}

\subsection*{Estimates for I}

First, we observe that since $I\cap \Omega^c\neq \emptyset$ one has $\frac{|I\cap\Omega_0|}{|I|} < \frac{1}{100}$ which means that $|I\cap\Omega_0^c| > \frac{99}{100} |I|$.

We now perform three independent stopping time type arguments for the functions $f, g, h$ which will be combined carefully later on.

Define first

$$\Omega_1= \left\{ x : M^{\n_1}(f)(x)> \frac{C \log <\n_1> }{2^1} \right\}$$
and set

$$\I_1= \left\{ I \in \I: |I\cap\Omega_1|>\frac{1}{100} |I| \right\},$$
then define

$$\Omega_2= \left\{ x : M^{\n_1}(f)(x)> \frac{C \log <\n_1> }{2^2} \right\}$$
and set

$$\I_2= \left\{ I\in \I \setminus\I_1 : |I\cap\Omega_2|>\frac{1}{100} |I| \right\},$$
and so on. The constant $C>0$ here is the one in the definition of the set $E'$ before. Clearly, since $\I$ is finite, we will run out of dyadic intervals after a while, thus producing the sets $(\{\Omega_n\})_n$
and $(\{\I_n\})_n$.

Independently, define

$$\Omega'_1= \left\{ x : S^{\n_2}(g)(x)> \frac{C \log <n_2> }{2^1} \right\}$$
and set

$$\I'_1= \left\{ I\in \I : |I\cap\Omega'_1|>\frac{1}{100} |I| \right\},$$
then as before define

$$\Omega'_2= \left\{ x : S^{\n_2}(g)(x)> \frac{C \log <n_2> }{2^2} \right\}$$
and set

$$\T'_2= \left\{ I\in \I\setminus\I'_1 : |I\cap\Omega'_2|>\frac{1}{100} |I| \right\},$$
and so on, producing the finitely many sets $(\{\Omega'_n\})_n$ and $(\{\I'_n\})_n$.
Of course, we would like to have such a decomposition available for $h$ as well. To do this, we first need to
construct the analogue of the set $\Omega_0$, for it. To do this, first pick  $N>0$ a big enough integer such that for every
$I\in\I$ we have $|I\cap\Omega^{''c}_{-N}|> \frac{99}{100} |I|$ where we defined

$$\Omega''_{-N}= \left\{ x : S(h)(x)> C 2^N \right\}.$$
Then, similarly to the previous algorithms, we define

$$\Omega''_{-N+1}= \left\{ x : S(h)(x)> \frac{C 2^N}{2^1} \right\}$$
and set

$$\I''_{-N+1}= \left\{ I\in \I : |I\cap\Omega''_{-N+1}|>\frac{1}{100} |I| \right\},$$
then define

$$\Omega''_{-N+2}= \left\{ x : S(h)(x)> \frac{C 2^N}{2^2} \right\}$$
and set

$$\T''_{-N+2}= \left\{ I\in \I \setminus\I''_{-N+1} : |I\cap\Omega''_{-N+2}|>\frac{1}{100} |I| \right\},$$
and so on, constructing the finitely many sets $(\{\Omega''_n\})_n$ and $(\{\T''_n\})_n$.

Using all these decompositions, we can decompose term $I$ further as

\begin{equation}\label{25}
\sum_{l_1, l_2>0, l_3>-N} \sum_{I\in \I_{l_1, l_2, l_3}}
\frac{1}{|I|^{3/2}}
|\langle f, \Phi^1_{I_{\n_1}}\rangle|
|\langle g, \Phi^2_{I_{\n_2}}\rangle|
|\langle h, \Phi^3_{I}\rangle| |I|,
\end{equation}
where $$\I_{l_1, l_2, l_3}:= \I_{l_1}\cap \I'_{l_2}\cap \I''_{l_3}.$$ 
Then, observe that since $I$ belongs to
$\I_{l_1, l_2, l_3}$ this means in particular that $I$ has not been selected at either of the previous $l_1 -1$, $l_2 -1$ and
$l_3 -1$ steps respectively, which means that all of $|I\cap\Omega_{l_1-1}|$,
$|I\cap\Omega'_{l_2-1}|$ and $|I\cap\Omega''_{l_3-1}|$ are smaller than $\frac{1}{100} |I|$ or equivalently, that one has
 
$$|I\cap\Omega^c_{l_1-1}|>\frac{99}{100} |I|,$$
$$|I\cap\Omega^{'c}_{l_2-1}|>\frac{99}{100} |I|$$ 
and
$$|I\cap\Omega^{''c}_{l_3-1}|>\frac{99}{100} |I|,$$
which implies that

\begin{equation}
|I\cap\Omega^c_{l_1-1}\cap\Omega^{'c}_{l_2-1}\cap\Omega^{''c}_{l_3-1}|> \frac{97}{100}|I|.
\end{equation}
Using this in (\ref{25}) one can estimate that expression by

$$\sum_{l_1, l_2>0, l_3>-N} \sum_{I\in \I_{l_1, l_2, l_3}}
\frac{1}{|I|^{3/2}}
|\langle f, \Phi^1_{I_{\n_1}}\rangle|
|\langle g, \Phi^2_{I_{\n_2}}\rangle|
|\langle h, \Phi^3_{I}\rangle| |I\cap\Omega^c_{l_1-1}\cap\Omega^{'c}_{l_2-1}\cap\Omega^{''c}_{l_3-1}|=$$

$$\sum_{l_1, l_2>0, l_3>-N} \int_{\Omega^c_{l_1-1}\cap\Omega^{'c}_{l_2-1}\cap\Omega^{''c}_{l_3-1}}
\sum_{I\in \I_{l_1, l_2, l_3}}
\frac{|\langle f, \Phi^1_{I_{\n_1}}\rangle|}{|I|^{1/2}}
\frac{|\langle g, \Phi^2_{I_{\n_2}}\rangle|}{|I|^{1/2}}
\frac{|\langle h, \Phi^3_{I}\rangle|}{|I|^{1/2}}\chi_{I}(x)\, dx $$

$$\lesssim \sum_{l_1, l_2>0, l_3>-N} \int_{\Omega^c_{l_1-1}\cap\Omega^{'c}_{l_2-1}\cap\Omega^{''c}_{l_3-1}\cap
\Omega_{\I_{l_1, l_2, l_3}}}
M^{\n_1}(f)(x) S^{\n_2}(g)(x) S(h)(x)\, dx$$

\begin{equation}\label{26}
\lesssim \sum_{l_1, l_2>0, l_3>-N} \log <\n_1> \log <\n_2> 2^{-l_1} 2^{-l_2} 2^{-l_3} |\Omega_{\I_{l_1, l_2, l_3}}|,
\end{equation}
where

$$\Omega_{\I_{l_1, l_2, l_3}}:= \bigcup_{I\in\I_{l_1, l_2, l_3}} I.$$
On the other hand, we also have

$$|\Omega_{\I_{l_1, l_2, l_3}}|\leq |\Omega_{\I_{l_1}}|\leq
\left|\left\{ x : M(\chi_{\Omega_{l_1}})(x)> \frac{1}{100} \right\}\right|$$

$$\lesssim |\Omega_{l_1}|= \left|\left\{ x : M^{\n_1}(f)(x)>\frac{C \log <n_1> }{2^{l_1}} \right\}\right|\lesssim 2^{l_1 p}.$$
Similarly, we have

$$|\Omega_{\I_{l_1, l_2, l_3}}|\lesssim 2^{l_2 q}$$
and also

$$|\Omega_{\I_{l_1, l_2, l_3}}|\lesssim 2^{l_3 \alpha},$$
for every $\alpha >1$. Here we used the fact that all the operators $M^{\n_1}$, $S^{\n_2}$ and $S$ are bounded
on $L^s$ as long as $1<s< \infty$ and also that $|E'_3|\sim 1$.
In particular, this implies that

\begin{equation}\label{27}
|\Omega_{\I_{l_1, l_2, l_3}}|\lesssim 2^{l_1 p \theta_1}
 2^{l_2 q \theta_2} 2^{l_3 \alpha \theta_3}
\end{equation}
for any $0\leq \theta_1, \theta_2, \theta_3 < 1$, such that $\theta_1+ \theta_2 +\theta_3= 1$.

On the other hand, (\ref{26}) can be split into

\begin{equation}\label{28}
\log<\n_1> \log<\n_2> \left(\sum_{l_1, l_2>0, l_3>0} 2^{-l_1} 2^{-l_2} 2^{-l_3} |\Omega_{\I_{l_1, l_2, l_3}}|+ 
\sum_{l_1, l_2>0, 0>l_3>-N} 2^{-l_1} 2^{-l_2} 2^{-l_3} |\Omega_{\I_{l_1, l_2, l_3}}|\right).
\end{equation}
To estimate the first expression in (\ref{28}) we use the inequality (\ref{27}) for $\theta_1, \theta_2, \theta_3$ so that
$1-p \theta_1 > 0$, $1-q\theta_2 >0$ and $1-\alpha\theta_3 > 0$, while to estimate the second term we use (\ref{27}) for $\theta_1, \theta_2, \theta_3$
such that $1-p\theta_1>0$, $1-q\theta_2>0$ and $1 - \alpha\theta_3 < 0$. With these choices, the sum in (\ref{28}) is indeed 
is $O(\log <\n_1> \log <\n_2>  )$ as desired. This ends the discussion of $I$.

\subsection*{Estimates for II}

This term is simpler to estimate, now that we defined our exceptional set so carefully. Notice that the intervals of interest are those inside $\Omega$.
One can split them as $\bigcup_{d\geq 0} \I_d$ where

$$\I_d := \left\{ I\in \I : I\subseteq \Omega \,\,\text{and}\,\, 2^d \leq \frac{\dist(I,\Omega^c)}{|I|}  < 2^{d+1} \right\}.$$
Observe that for any $d\geq 0$ one has

$$\sum_{I\in\I_d} |I| \lesssim |\Omega| \lesssim 1.$$
Also, for every $I\in\I_d$ one has that $2^d I \cap \Omega^c = \emptyset$ and also there exists $\widetilde{I}$ dyadic and of the same length, which lies $\#$ steps of length $|I|$ away from $I$ (with $2^d\leq |\#|\leq 2^{d+1}$),
and having the property that $\widetilde{I}\cap \Omega^c \neq \emptyset$. In particular, this means that $I_{\n_1}$ and $I_{\n_2}$ are $\n_1-\#$ and $\n_2 - \#$ steps of length $|I|$ away from $\widetilde{I}$.
Using all these facts, one can estimate term $II$ by

\begin{equation}
\sum_{d\geq 0} \sum_{I\in\I_d}
\frac{|\langle f, \Phi^1_{I_{\n_1}}\rangle|}{|I|^{1/2}}
\frac{|\langle g, \Phi^2_{I_{\n_2}}\rangle|}{|I|^{1/2}}
\frac{|\langle h, \Phi^3_{I}\rangle|}{|I|^{1/2}} |I| \lesssim
\end{equation}

$$ \sum_{d\geq 0} \sum_{ 2^d\leq |\#|\leq 2^{d+1} }\sum_{I\in\I_d} (\log <\n_1 - \#>) 2^{ 5d} (\log <\n_2 - \#>) 2^{ 5d} 2^{-M d} |I| \lesssim $$

$$(\log <\n_1>) (\log <\n_2 >)$$
by using the trivial fact that 

$$\log <\n_j - \#> \leq \log <\n_j> <\#>$$ 
for $j=1,2$.

The proof is now complete.

\section{Appendix $1$: Logarithmic estimates for the shifted maximal function}

The goal of this section is to prove the following theorem that have been used before. This result can be found in Stein \cite{stein}, but we decided to give a selfcontained proof of it here (which we (re)discovered independently), 
not only for reader's convenience, but also for the fact that some particular notations that will be introduced, will be quoted and used later on as well. 
\begin{theorem}\label{maximal} (\cite{stein})
For any $\n\in\Z$, the shifted maximal function $M^{\n}$ is bounded on every $L^p$ space for $1<p<\infty$, with a bound of type $O(\log <\n>)$.
\end{theorem}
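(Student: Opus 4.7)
The plan is to prove Theorem \ref{maximal} by establishing the trivial $L^{\infty}$ bound $\|M^{\n} f\|_{\infty}\lesssim \|f\|_{\infty}$ (uniform in $\n$), together with a weak-type $(1,1)$ estimate with the desired $O(\log \langle \n \rangle)$ constant, and then concluding by Marcinkiewicz interpolation. The $L^{\infty}$ bound is immediate, since $\int_{\R}\widetilde{\chi_{I_{\n}}}(y)\,dy\lesssim |I|$ uniformly in $\n$.

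For the weak-type $(1,1)$ estimate, since $M^{\n}f = M^{\n}|f|$, I would assume $f\geq 0$ and apply a standard Calder\'{o}n--Zygmund decomposition at level $\lambda$, producing disjoint dyadic intervals $\{Q_j\}$ with $\lambda < |Q_j|^{-1}\int_{Q_j}f\leq 2\lambda$ and $f\leq \lambda$ on the complement, together with the decomposition $f = g + b$ in which $b = \sum_j b_j$, $\supp\, b_j\subset Q_j$, and $\|b_j\|_1\lesssim \lambda |Q_j|$. The good part satisfies $g\leq C\lambda$ pointwise and so $\{M^{\n}g > C'\lambda\}$ is empty for $C'$ large enough. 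The heart of the matter is to locate, for each $Q_j$, an exceptional set $\Omega_j$ of measure $\lesssim \log\langle \n\rangle\cdot |Q_j|$ outside of which $M^{\n}|b_j|(x)\lesssim \lambda$.

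I would take $\Omega_j = 3Q_j \cup \bigcup_{k=0}^{\lfloor \log_2\langle \n\rangle\rfloor}\mathcal{U}_{j,k}$, where $\mathcal{U}_{j,k}$ is the union of all dyadic intervals $I$ of length $2^{-k}|Q_j|$ whose shift $I_{\n}$ intersects $Q_j$. Geometrically $\mathcal{U}_{j,k}$ is essentially a copy of $Q_j$ displaced by roughly $\n\cdot 2^{-k}|Q_j|$: at that scale there are exactly $2^k$ dyadic sub-intervals of $Q_j$, each lifting to a unique dyadic $I$ with $I_{\n}$ equal to that sub-interval (since $\n\in\Z$), and these $I$'s fill an interval of length $\sim |Q_j|$ at the stated displacement. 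Hence $|\mathcal{U}_{j,k}|\lesssim |Q_j|$, and since only $O(\log\langle \n\rangle)$ values of $k$ are relevant (for $k > \log_2\langle \n\rangle$ the resulting intervals are absorbed into $3Q_j$) one obtains $|\Omega_j|\lesssim \log\langle \n\rangle\cdot |Q_j|$; summing over $j$ yields the desired weak-type bound for the exceptional set.

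The main obstacle is verifying that $M^{\n}|b_j|(x)\lesssim \lambda$ for $x\notin \Omega_j$. For an $I\ni x$ with $|I| > |Q_j|$ the crude inequality $\frac{1}{|I|}\int |b_j|\widetilde{\chi_{I_{\n}}}\leq \|b_j\|_1/|I|\lesssim \lambda|Q_j|/|I|$ suffices and sums geometrically in $|I|$. For $|I|\leq |Q_j|$ the exclusion of $x$ from $\Omega_j$ forces $I_{\n}\cap Q_j = \emptyset$, and one must exploit the polynomial decay $\widetilde{\chi_{I_{\n}}}(y)\leq (1 + \dist(y,I_{\n})/|I|)^{-100}$ on $Q_j$. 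The delicate point is that these polynomial tails must be summed simultaneously over the dyadic scales of $I$ and, when later summing $M^{\n}b_j$ over $j$, across the disjoint family $\{Q_j\}$, without losing the $\log\langle \n\rangle$ bound; the large exponent $100$ in the definition of $\widetilde{\chi_{I_{\n}}}$ is precisely what allows this bookkeeping to close.
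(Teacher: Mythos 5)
Your overall architecture (trivial $L^\infty$ bound, weak $(1,1)$ with constant $O(\log\langle \n\rangle)$, Marcinkiewicz interpolation) is sound, and your count of $|\Omega_j|$ is exactly the geometric observation the paper also uses (the back-translates of the dyadic subintervals of $Q_j$ at scale $2^{-k}|Q_j|$ tile a translate of $Q_j$, and only $O(\log\langle \n\rangle)$ scales produce translates not absorbed into $3Q_j$). But the central claim of your bad-part estimate --- that $M^{\n}|b_j|(x)\lesssim\lambda$ for $x\notin\Omega_j$ --- is false with $\Omega_j$ as you defined it. The point is that $b_j$ is only $L^1$-normalized ($\|b_j\|_1\lesssim\lambda|Q_j|$), not bounded, while $\widetilde{\chi_{I_{\n}}}$ has polynomial tails. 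Take $b_j$ with mass $\sim\lambda|Q_j|$ concentrated near the left endpoint of $Q_j$, and take $I$ with $|I|=2^{-k}|Q_j|$ whose shift $I_{\n}$ sits just to the left of $Q_j$ at distance $\sim|I|$ (so $I_{\n}\cap Q_j=\emptyset$ and $I\not\subseteq\mathcal{U}_{j,k}$, and for $\langle\n\rangle\gg 2^{k}$ the interval $I$ lies far from $3Q_j$ and from the other $\mathcal{U}_{j,k'}$). Then $\widetilde{\chi_{I_{\n}}}\sim 2^{-100}$ on the spike and the average is $\sim\lambda|Q_j|2^{-100}/|I|=\lambda 2^{k-100}$, which is $\gg\lambda$ once $k>100$. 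So merely requiring $I_{\n}\cap Q_j=\emptyset$ does not let the decay of $\widetilde{\chi_{I_{\n}}}$ beat the factor $|Q_j|/|I|$; you would have to enlarge $\mathcal{U}_{j,k}$ in a scale-dependent way (e.g.\ to all $I$ with $\dist(I_{\n},Q_j)\lesssim 2^{k/100}|I|$) and then recheck both the measure count and the absorption into $CQ_j$ for $k\gtrsim\log\langle\n\rangle$. Relatedly, even granting a pointwise bound for each $j$, you need $\sum_j M^{\n}|b_j|(x)\lesssim\lambda$ off the exceptional set, and you explicitly defer this summation over the disjoint family $\{Q_j\}$ and over scales to unexamined ``bookkeeping''; as set up, that is precisely where the argument has to close and it is not automatic.

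The paper sidesteps both difficulties by first reducing $M^{\n}$ to the \emph{sharp} dyadic version $\widetilde{M^{\n}}f(x)=\sup_{x\in I}|I_{\n}|^{-1}\int_{I_{\n}}|f|$, via the pointwise domination $\widetilde{\chi_{I_{\n}}}\lesssim\sum_{\#}\langle\#\rangle^{-100}\chi_{I^{\#}_{\n}}$ and the summability of $\langle\#\rangle^{-100}\log\langle\n+\#\rangle$; this eliminates the tails once and for all. For the sharp version no Calder\'on--Zygmund decomposition of $f$ is needed: one proves the distributional inequality $|\{\widetilde{M^{\n}}f>\lambda\}|\lesssim(\log\langle\n\rangle)\,|\{Mf>\lambda\}|$ directly, by selecting the maximal dyadic intervals $I_{\n}$ with average $>\lambda$ and covering the superlevel set by the $O(\log\langle\n\rangle)$ associated intervals $I_{\n}^1,\dots,I_{\n}^{[\log\langle\n\rangle]}$ --- the same counting you use, but applied to a positive, compactly supported averaging operator where no bad part ever appears. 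I would recommend restructuring your proof along these lines: do the tail reduction first, then run your covering argument on the sharp operator.
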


\begin{proof}
First, we observe that in order to prove the desired estimates, it is enough to prove them for the corresponding {\it sharp} maximal function $\widetilde{M^{\n}}$ defined by

\begin{equation}
\widetilde{M^{\n}} f(x) := \sup_{x\in I} \frac{1}{|I_{\n}|} \int_{I_{\n}} |f(y)| d y
\end{equation}
where the suppremum is taken only over dyadic intervals. 

To see this, fix $x$ and $I$ so that $x\in I$. One can write

$$\frac{1}{|I_{\n}|} \int_{I_{\n}} |f(y)| d y \lesssim \sum_{\#\in \Z}\left[\frac{1}{|I^{\#}_{\n}|} \int_{I^{\#}_{\n}} |f(y)| d y\right] \frac{1}{<\#>^{100}},$$
where $I^{\#}_{\n}$ is the dyadic interval of the same length with $I_{\n}$ and lying $\#$ steps of length $|I_{\n}|$ away from it.
In particular, using the above and assuming that the theorem holds for $\widetilde{M^{\n}}$, one has

$$\|M^{\n} f\|_p \lesssim \sum_{\#\in \Z}\frac{1}{<\#>^{100}} \|\widetilde{M^{\n + \#}}f\|_p \lesssim \sum_{\#\in \Z}\frac{1}{<\#>^{100}} (\log <\n + \#>)  \|f\|_p \lesssim$$

$$\lesssim \sum_{\#\in \Z}\frac{1}{<\#>^{100}} (\log (<\n><\#>)) \lesssim \log <\n>  \|f\|_p,$$
as desired. We are then left with proving the theorem for $\widetilde{M^{\n}}$.

Let now $\lambda > 0$. We claim that one has the following inequality

\begin{equation}\label{star}
| \{ x : \widetilde{M^{\n}} f (x) > \lambda \} | \lesssim (\log <\n>) |\{ x : M f (x) > \lambda \} |
\end{equation}
where $M$ is the classical Hardy-Littlewood maximal operator. Assuming (\ref{star}), the theorem  for $\widetilde{M^{\n}}$ follows from the Hardy-Littlewood theorem by interpolation with the trivial
$L^{\infty}$ estimate.

To finally prove (\ref{star}) denote by $\I_{\n}^{\lambda}$ the collection of all dyadic and maximal with respect to inclusion intervals $I_{\n}$, for which $$\frac{1}{|I_{\n}|} \int_{I_{\n}} |f(y)| d y > \lambda.$$
Note that all of them are disjoint and one also has

$$\bigcup_{I_{\n}\in \I_{\n}^{\lambda}} I_{\n} = \{ x : Mf(x) >\lambda \}.$$
Then, for every such a selected maximal dyadic interval $I_{\n}$, consider its dyadic subintervals of length $|I_{\n}|$, $|I_{\n}|/2$, $|I_{\n}|/2^2$ ... , etc. Observe that there exsist only $[\log <\n>]$ disjoint dyadic intervals 
$I_{\n}^1, I_{\n}^2, ... ,I_{\n}^{[\log<\n>]}$ of the same length with $|I_{\n}|$, so that 
the translate with $ - \n$ corresponding units of any such smaller dyadic subinterval of $I_{\n}$, becomes a subinterval of one of these $I_{\n}^1, I_{\n}^2, ... ,I_{\n}^{[\log<\n>]}$. The claim is now that

$$\{ x : \widetilde{M^{\n}} f(x) > \lambda \} \subseteq \bigcup_{I_{\n}\in \I_{\n}^{\lambda}} (I_n \cup I_{\n}^1\cup ... \cup I_{\n}^{[\log<\n>]}).$$
To see this, pick $x_0$ so that $M^{\n} f(x_0) > \lambda$. Then, this means that there exists a dyadic interval $J$ containing $x_0$, so that $\frac{1}{|J_{\n}|} \int_{J_{\n}} |f(y)| d y > \lambda$.
Because of the previous construction, one can for sure find one selected maximal interval of type $I_{\n}$, so that $J_{\n}\subseteq I_{\n}$. But then, this means in particular that $J$ itself will be a subset of either
$I_{\n}$ or $I_{\n}^1$ or ... or $I_{\n}^{[\log<\n>]}$ which implies the claim.

It is now easy to see that this claim together with the disjointness of the maximal intervals $I_{\n}$, imply (\ref{star}). The proof is then complete. \footnote{Of course, since the trivial $L^{\infty}$ estimate comes with an $O(1)$ bound, by interpolation the $L^p$ operatorial bound of $M^{\n}$ will be even $O((\log <\n>)^{1/p})$. 
But for simplicity, we used the $O(\log <\n>)$ bound all the time.       }

\end{proof}

\section{Appendix $2$: Logarithmic estimates for the shifted square function}

The goal of this last section is to prove the following theorem which played an important role earlier in the argument \footnote{It may very well be that this result has been observed before (as it was the case with the previous shifted
maximal function) but since we didn't find it in the literature, we include a selfcontained proof of it in what follows.}.

\begin{theorem}\label{square}
For any $\n\in\Z$, the shifted square function $S^{\n}$ is bounded on every $L^p$ space for $1<p<\infty$, with a bound of type $O(\log <\n>)$.
\end{theorem}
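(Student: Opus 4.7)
The plan is to follow the same philosophy as in the proof of Theorem~\ref{maximal}: first establish an $L^2$ estimate for $S^{\n}$ whose constant is independent of $\n$, then a weak-type $(1,1)$ estimate with a loss of $O(\log<\n>)$, and finally interpolate to cover $1<p\le 2$. The range $p>2$ will then be recovered by $\ell^2$-valued duality, equivalently by running the same vector-valued Calder\'{o}n--Zygmund scheme in the dual setting.

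The $L^2$ estimate is routine. Expanding $\|S^{\n}g\|_2^2=\sum_I|\langle g,\Phi^2_{I_{\n}}\rangle|^2$ and grouping by scale $|I|=2^{-k}$, Definition~\ref{psi} ensures that the frequency supports $\omega_{2^{-k}}$ are annuli of length and distance to the origin both comparable to $2^k$, which are finitely overlapping across $k$. Within each fixed scale the $L^2$-normalized bumps $(\Phi^2_{I_{\n}})_{|I|=2^{-k}}$ are, up to rapidly decaying Schwartz tails, translates of a single $L^2$-normalized $\psi$-type function along the lattice of step $2^{-k}$, hence form a Bessel system whose constant is independent of $\n$. Summing over scales yields $\|S^{\n}g\|_2\lesssim\|g\|_2$ uniformly in $\n$. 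For the weak-$(1,1)$ bound I would use vector-valued Calder\'{o}n--Zygmund theory applied to the analysis operator $\mathbf{T}g:=(|I|^{-1/2}\langle g,\Phi^2_{I_{\n}}\rangle 1_I)_I$, so that $S^{\n}g=\|\mathbf{T}g\|_{\ell^2}$. The key technical input is a H\"{o}rmander-type regularity bound with a logarithmic constant for the $\ell^2$-valued kernel $K^{\n}(x,y)=(|I|^{-1/2}1_I(x)\overline{\Phi^2_{I_{\n}}}(y))_I$:
$$\sup_{y_0}\int_{|x-y_0|>2|y-y_0|}\|K^{\n}(x,y)-K^{\n}(x,y_0)\|_{\ell^2}\,dx\;\lesssim\;\log<\n>.$$

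The main obstacle is verifying this H\"{o}rmander regularity with the logarithmic rather than linear loss; this is the only place where the shift by $\n$ genuinely enters. The geometric observation I plan to exploit, mirroring the one used for $\widetilde{M^{\n}}$, is that for fixed $y,y_0$ with $|y-y_0|$ small, the dyadic scales $|I|$ at which the kernel components are \emph{not} already suppressed by the Schwartz tails of $\Phi^2_{I_{\n}}$ around $I_{\n}$ lie in a range of the form $|y-y_0|\lesssim|I|\lesssim<\n>|y-y_0|$, comprising only $O(\log<\n>)$ scales. Outside this band the Schwartz decay of the bumps produces a geometrically convergent contribution; inside the band, the mean value theorem applied to $\Phi^2_{I_{\n}}(y)-\Phi^2_{I_{\n}}(y_0)$ combined with the Bessel structure at each scale yields an $O(1)$ integrated contribution per scale. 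Adding up the $O(\log<\n>)$ dangerous scales delivers precisely the claimed logarithmic constant.

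Once the H\"{o}rmander bound is in hand, the standard vector-valued Calder\'{o}n--Zygmund scheme combined with the $L^2$ estimate produces weak-$(1,1)$ for $\mathbf{T}$ with constant $O(\log<\n>)$; Marcinkiewicz interpolation then gives $L^p\to L^p(\ell^2)$ with the same loss, which is $\|S^{\n}\|_{L^p\to L^p}\lesssim\log<\n>$ for $1<p\le 2$. For $p>2$, $\ell^2$-duality identifies $\|S^{\n}\|_{L^p\to L^p}$ with the norm of the adjoint reconstruction operator $L^{p'}(\ell^2)\to L^{p'}$, which is controlled by a parallel weak-$(1,1)$/$L^2$ argument in the dual vector-valued setting (alternatively, Khintchine's inequality reduces the claim to uniform scalar Calder\'{o}n--Zygmund bounds for the randomized operators $g\mapsto\sum_I\epsilon_I|I|^{-1/2}\langle g,\Phi^2_{I_{\n}}\rangle 1_I$ over signs $\epsilon_I\in\{\pm 1\}$). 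Either route produces $O(\log<\n>)$ on every $L^p$ with $1<p<\infty$, as required.
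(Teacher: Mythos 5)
Your proposal is correct and shares the paper's top-level skeleton (uniform $L^2$ bound, weak $(1,1)$ with logarithmic loss, Marcinkiewicz interpolation for $1<p\le 2$, Khinchine plus duality for $p>2$), but the weak $(1,1)$ step is organized in a genuinely different way. You keep the classical Calder\'{o}n--Zygmund exceptional set $\bigcup_J 2J$ (measure $O(1)\cdot\lambda^{-1}\|f\|_1$) and push the entire logarithmic loss into a vector-valued H\"{o}rmander regularity bound
$$\sup_{y}\int_{|x-y_0|>2|y-y_0|}\|K^{\n}(x,y)-K^{\n}(x,y_0)\|_{\ell^2}\,dx\;\lesssim\;\log<\n>,$$
after which black-box vector-valued Calder\'{o}n--Zygmund theory delivers the weak type. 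The paper makes the opposite bookkeeping choice: it proves an $O(1)$ far-field estimate on the bad part, namely $\int_{(\Omega_J)^c}S^{\n}b_J\,dx\lesssim\lambda|J|$ (inequality \eqref{l1}), and pays the $\log<\n>$ in the measure of a thickened exceptional set $\bigcup_J\Omega_J$ with $\Omega_J=5J\cup 5J^1\cup\dots\cup 5J^{[\log<\n>]}$, where the auxiliary intervals $J^1,\dots,J^{[\log<\n>]}$ are exactly the $O(\log<\n>)$ translates introduced in the proof of Theorem~\ref{maximal} that capture all shifted dyadic subintervals of $J$. Both choices rest on the same geometric fact --- for a fixed bad interval only $O(\log<\n>)$ dyadic scales are ``dangerous'' --- and both produce the same constant; yours is a reduction to a known machine, while the paper's is a hands-on decomposition in the spirit of its shifted maximal function argument. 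One small correction to your heuristic, which does not affect the conclusion: the dangerous band of scales is $|y-y_0|/<\n>\lesssim|I|\lesssim|y-y_0|$ (below that the relevant shifted intervals fall inside the excised ball, above that the mean value theorem contributes the extra factor $|y-y_0|/|I|$ that makes the sum geometric); the window $|y-y_0|\lesssim|I|\lesssim<\n>|y-y_0|$ you wrote is what appears if one tracks $|x-y_0|$ rather than $|I|$. Either way the count is $O(\log<\n>)$ and the rest of your argument goes through. The $L^2$ step and the $p>2$ step agree with the paper's.
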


\begin{proof}

Besides the observations of the previous section, the proof is based on a classical Calder\'{o}n-Zygmund decomposition \cite{stein}.

First, let us observe that $S^{\n}$ is bounded on $L^2$ with a bound independent of $\n$. Indeed, one can see that

$$\|S^{\n} f \|_2 = \left(\sum_I \langle f, \Phi_{I_{\n}} \rangle ^2 \right)^{1/2}$$
which is clearly comparable to the $L^2$ norm of the classical Littlewood-Paley square function, which is known to be bounded on $L^2$.

Next, we show that 

\begin{equation}\label{weak}
\|S^{\n} f \|_{1,\infty} \lesssim (\log <\n>) \|f\|_{1}.
\end{equation}
or more specifically that

\begin{equation}\label{weak1}
|\{ x\in\R : S^{\n} f (x) > \lambda \} | \lesssim \log <\n> \frac{1}{\lambda} \|f\|_1.
\end{equation}
Fix such a $\lambda > 0$ and perform a Calder\'{o}n-Zygmund decomposition of the function $f$ {\it at level $\lambda$}. Pick one by one maximal dyadic intervals $J$ so that

$$\frac{1}{|J|}\int_J |f(y)| dy > \lambda.$$
Observe that all these intervals are by construction disjoint and denote their union with $\Omega$. One has

\begin{equation}\label{omega}
|\Omega| = \sum_J |J| < \frac{1}{\lambda} \sum_J \int_J |f(y)| dy \leq \frac{1}{\lambda} \|f\|_1.
\end{equation}
Split now the function $f$ as 

$$f = g + b$$
where

$$g := f\chi_{\Omega^c} + \sum_J \left[ \frac{1}{|J|}\int_J f(y) dy  \right] \chi_J$$
and

$$b := f - g := \sum_J b_J$$
where

$$b_J := \left[f - \frac{1}{|J|}\int_J f(y) dy \right] \chi_J.$$
Clearly, $\supp b_J \subseteq J$. Observe also that one has

$$|f(x)|\leq \lambda$$
for every $x\in \Omega^c$ and as a consequence,

$$\|g\|_{\infty} \lesssim \lambda$$
since one also observes that

$$|\frac{1}{|J|}\int_J f(y) dy| \leq \frac{1}{|J|}\int_J |f(y)| dy \leq \frac{2}{|\widetilde{J}|} \int_{\widetilde{J}} |f(y)| d y \leq 2 \lambda$$
where $\widetilde{J}$ is the unique dyadic interval containing $J$ and twice as long. It is also important to observe that

$$\int_{\R} b_J(y) d y = 0$$
by definition and also that

$$\|b_J\|_1 = \int_J |b_J(y)| d y \leq \int_J |f(y)| d y + \left(\frac{1}{|J|}\int_J |f(y)| dy\right) |J| \lesssim \int_J |f(y)| d y  \lesssim \lambda |J|$$
as we have seen.

Using all these properties, one can write

\begin{equation}\label{weak2}
|\{ x\in\R : S^{\n} f (x) > \lambda \} | \leq |\{ x\in\R : S^{\n} g (x) > \lambda/2 \} | + |\{ x\in\R : S^{\n} b (x) > \lambda/2 \} |.
\end{equation}
To estimate the first term in (\ref{weak2}), we use the $L^2$ boundedness of $S^{\n}$ and we write

$$|\{ x\in\R : S^{\n} g (x) > \lambda/2 \} | \lesssim \frac{1}{\lambda^2} \|S^{\n} g \|_2^2 \lesssim $$

$$\frac{1}{\lambda^2} \|g\|_2^2 = \frac{1}{\lambda^2} \int_{\R}|g(x)|^2 d x \lesssim \frac{1}{\lambda^2} \lambda \int_{\R}|g(x)| d x = $$

$$\frac{1}{\lambda} \|g\|_1 \lesssim \frac{1}{\lambda} \left( \int_{\Omega^c}|f(x)| d x + \sum_J \int_J |f(x)| d x \right) \lesssim \frac{1}{\lambda} \|f\|_1,$$
as desired.

To estimate the second term in (\ref{weak2}), we proceed as follows. First, for any interval $J$, consider the associated $J^1, J^2, ..., J^{[\log<\n>]}$ as defined in the previous section and define the set $\Omega_J$ by

$$\Omega_J := 5 J\cup 5 J^1 \cup 5 J^2 \cup ... \cup 5 J^{[\log<\n>]}.$$
Then, one has

\begin{equation}\label{weak3}
|\{ x\in\R : S^{\n} b (x) > \lambda/2 \} | \leq \left|\left\{ x\in \bigcup_J \Omega_J : S^{\n} b (x) > \lambda/2 \right\}\right| + 
\end{equation}

$$\left|\left\{ x\in (\bigcup_J \Omega_J)^c : S^{\n} b (x) > \lambda/2 \right\}\right|.$$

The first expression is easy to estimate since one can write

$$\left|\left\{ x\in \bigcup_J \Omega_J : S^{\n} b (x) > \lambda/2 \right\}\right| \leq \left|\bigcup_J \Omega_J\right| \lesssim (\log <\n>) \sum_J |J| \lesssim (\log<\n>) \frac{1}{\lambda} \|f\|_1,$$
as we have seen before. The second expression in (\ref{weak3}) can be majorized by

$$\frac{1}{\lambda} \int_{(\bigcup_J \Omega_J)^c} S^{\n} b (x) d x \leq \frac{1}{\lambda} \sum_J \int_{(\bigcup_J \Omega_J)^c} S^{\n} b_J (x) d x \leq \frac{1}{\lambda} \sum_J \int_{(\Omega_J)^c} S^{\n} b_J (x) d x     $$ 
and we claim now that for any $J$ one has

\begin{equation}\label{l1}
\int_{(\Omega_J)^c} S^{\n} b_J (x) d x \lesssim \lambda |J|.
\end{equation}
Assuming (\ref{l1}), one can continue the previous inequality and further majorize it by

$$\frac{1}{\lambda} \lambda \sum_J |J| \lesssim |\Omega| \lesssim \frac{1}{\lambda} \|f\|_1$$
as desired.

We are then left with proving our claim (\ref{l1}). First, we majorize the left hand side of it by

$$\int_{(\Omega_J)^c} \left(\sum_I \frac{|\langle b_J, \Phi_{I_{\n}}\rangle |}{|I|^{1/2}} 1_I(x) \right) dx = \sum_I \int_{(\Omega_J)^c} \frac{|\langle b_J, \Phi_{I_{\n}}\rangle |}{|I|^{1/2}} 1_I(x) ) dx =$$

$$\sum_{|I|\leq |J|} \int_{(\Omega_J)^c} \frac{|\langle b_J, \Phi_{I_{\n}}\rangle |}{|I|^{1/2}} 1_I(x) ) dx + 
\sum_{|I| > |J|} \int_{(\Omega_J)^c} \frac{|\langle b_J, \Phi_{I_{\n}}\rangle |}{|I|^{1/2}} 1_I(x) ) dx :=$$

$$ A + B.$$

\subsection*{Estimating A}

The main observation here is to realize that since $|I|\leq |J|$ and $I\cap (\Omega_J)^c \neq \emptyset$, one must in particular have $I_{\n}\cap 3J = \emptyset$. This allows one to estimate $A$ by

$$\sum_{|I_{\n}|\leq |J|} \left(1 + \frac{\dist (I_{\n}, J)}{|I_{\n}|}\right)^{-10} \int_{\R} |b_J(y)| d y \lesssim
\lambda |J| \sum_{|I_{\n}|\leq |J|} \left(1 + \frac{\dist (I_{\n}, J)}{|I_{\n}|}\right)^{-10} \lesssim \lambda |J|,$$
as required by (\ref{l1})

\subsection*{Estimating B}

This time, one has to take into account that fact that

\begin{equation}\label{zero}
\int_{\R} b_J(y) d y = 0.
\end{equation}
As before, one can estimate $B$ by

$$\sum_{|I_{\n}| > |J|} |\langle b_J, \Phi^{\infty}_{I_{\n}}\rangle|$$
where this time $\Phi^{\infty}_{I_{\n}} := |I_{\n}|^{1/2}\Phi_{I_{\n}}$ is an $L^{\infty}$ normalized bump. In order to emphasize that the dependence of $\n$ is irrelevant now, we rewrite the above expression as 

$$\sum_{|K| > |J|} |\langle b_J, \Phi^{\infty}_{K}\rangle|$$
where the sum is over dyadic intervals $K$.

Fix $K$ such that $|K| > |J|$ and observe that

$$|\langle b_J, \Phi^{\infty}_{K}\rangle| = \left|\int_{\R} b_J(z)\overline{\Phi^{\infty}_{K}}(z) d z\right| = \left|\int_{J} b_J(z)(\overline{\Phi^{\infty}_{K}}(z) - \overline{\Phi^{\infty}_{K}}(c_J)) d z \right|$$
where $c_J$ denotes the midpoint of the interval $J$.

Then, observe that for $z\in J$, one has

$$|\overline{\Phi^{\infty}_{K}}(z) - \overline{\Phi^{\infty}_{K}}(c_J)| \lesssim |J|\frac{1}{|K|} \left(1 + \frac{\dist(K, J)}{|K|} \right)^{-10}$$
and so the previous term becomes smaller than

$$|J|\frac{1}{|K|} \left(1 + \frac{\dist(K, J)}{|K|} \right)^{-10} \int_J |b_J(y)| d y \lesssim |J|\frac{1}{|K|} \left(1 + \frac{\dist(K, J)}{|K|} \right)^{-10} \lambda |J|.$$
Finally, the corresponding (\ref{l1}) follows from the straightforward observation that

$$\sum_{|K| > |J|} \frac{|J|}{|K|} \left(1 + \frac{\dist(K, J)}{|K|} \right)^{-10} \lesssim 1.$$

By interpolating between $L^2$ and weak-$L^1$ we obtain the theorem for any $1<p\leq 2$. To prove the rest of the estimates we proceed as usual, by duality.
Fix then $2 < p <\infty$. By using Khinchin inequality, one can write

$$\| S^{\n} f \|_p^p = \int_{\R} \left(\sum_I \frac{|\langle f , \Phi_{I_{\n}}\rangle |^2}{|I|} \chi_I (x) \right)^{p/2} dx \lesssim $$

\begin{equation}\label{hincin}
\int_{\R} \int_0^1 \left| \sum_I r_I (t) \langle f , \Phi_{I_{\n}}\rangle h_I (x) \right|^p d x d t = \int_0^1 \left\|\sum_I r_I (t) \langle f , \Phi_{I_{\n}}\rangle h_I \right\|_p^p d t,
\end{equation}
where $(r_I)_I$ are the Rademacher functions and $(h_I)_I$ the $L^2$-normalized Haar functions.

Fix now $t\in [0,1]$ and consider the linear operator 

$$ f \rightarrow \sum_I r_I (t) \langle f , \Phi_{I_{\n}}\rangle h_I.$$
Using the fact that $S^{\n}$ and the Littlewood-Paley square function associated to $(h_I)_I$ are bounded below $L^2$, an argument identical to the one used to prove Theorem \ref{discrete} shows that the above operator is also bounded
below $L^2$ and by duality, above $L^2$ as well, with bounds independent of $t$ which grow logarithmically in $<\n>$. Using this fact in (\ref{hincin}), completes the proof of the theorem.

\end{proof}

\section{Generalizations}

Let us first observe that the first commutator $C_1 f$ can also be written as

\begin{equation}
C_1 f(x) = p.v. \int_{\R} 
\left(\frac{\Delta_t}{t} A(x)\right) f(x+t) \frac{d t}{t}
\end{equation}
where $\Delta_t$ is the {\it finite difference} operator at scale $t$ given by

$$\Delta_t g(x) := g(x+t) - g(x).$$ There is a very simple way to motivate the introduction of this operator. Start with the Leibnitz rule identity

$$(A f)' = A' f + A f'$$
and solve for $A' f$ to obtain

$$A' f = (A f)' - A f' = D(A f) - A D f = [D, A] f$$
where $D$ is the operator of taking one derivative and $A$ is viewed now as the operator of multiplication with the function $A(x)$.
 In particular, assuming that $A'\in L^{\infty}$, the commutator $[D, A]$ maps $L^p$ into itself boundedly, for every $1<p<\infty$.
{\it Does this property hold for the operator $[|D|, A]$ as well ?} one might ask. A straightforward calculation shows that $[|D|, A]$ is precisely the first commutator of Calder\'{o}n.

Given these, it is of course natural to ask {\it what can be said about the double commutator $[|D|, [|D|, A]]$ ?}

A direct calculation shows that the expression $[|D|, [|D|, A]] (f) (x)$ is equal to

\begin{equation}\label{commm1}
 p.v. \int_{\R^2}
\left (\frac{\Delta_t}{t}\circ  \frac{\Delta_s}{s} A(x)\right) f(x+t+s) \frac{ d t}{t} \frac{ d s}{s}
\end{equation}
a formula that can be naturally seen as a bilinear operator, this time depending on $f$ and $A''$. Its symbol can be again calculated easily and it is given by

$$\left(
\int_0^1 {\rm sgn} (\xi + \alpha \xi_1) d \alpha
\right)^2$$
which is precisely the square of the symbol of the first commutator of Calder\'{o}n.

\begin{theorem}\label{gen22}
Let $a\neq 0$ and $b\neq 0$ and consider the expression

$$
 p.v. \int_{\R^2}
\left (\frac{\Delta_{a t}}{t}\circ  \frac{\Delta_{b s}}{s} A(x)\right) f(x+t+s) \frac{ d t}{t} \frac{ d s}{s}.
$$
Viewed as a bilinear operator in $f$ and $A''$, it extends naturally as a bounded operator from $L^p\times L^q$ into $L^r$ for every
$1<p, q \leq \infty$ with $1/p+1/q=1/r$ and $1/2 < r <\infty$.
\end{theorem}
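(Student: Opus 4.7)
The plan is to reduce to the framework of Theorem \ref{discrete} by first computing the symbol and then carrying out a Whitney/Fourier series decomposition entirely parallel to the first commutator proof.

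A direct calculation mimicking the derivation of (\ref{5}) shows that, viewed as a bilinear operator in $f$ and $A''$, the expression has symbol (up to a nonzero constant)
\begin{equation*}
m(\xi,\xi_1) \;=\; m_a(\xi,\xi_1)\, m_b(\xi,\xi_1), \qquad m_c(\xi,\xi_1) := \int_0^1 \sgn(\xi+\alpha c\xi_1)\,d\alpha.
\end{equation*}
The key inputs are the identity $e^{2\pi i ct\xi_1}-1 = 2\pi i ct\xi_1\int_0^1 e^{2\pi i \alpha ct\xi_1}d\alpha$ applied to both nested finite differences, together with the formula for the Fourier transform of $p.v.\,1/t$; the two factors of $\xi_1$ pulled out cancel exactly the $(2\pi\xi_1)^{-2}$ arising from the substitution $\hat{A}(\xi_1)=-\hat{A''}(\xi_1)/(2\pi\xi_1)^2$. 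The symbol $m$ is bounded and smooth away from the three lines $\xi=0$, $\xi+a\xi_1=0$, $\xi+b\xi_1=0$, and merely continuous across them, so again it is not a Marcinkiewicz-H\"ormander-Mihlin symbol.

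From here the proof mirrors the first commutator argument. Perform the Littlewood-Paley decomposition of (\ref{13})-(\ref{15}): the middle piece (corresponding to $|\xi_1|\ll|\xi|$) has a classical symbol because all three singular lines miss its support, so the Coifman-Meyer paraproduct theorem handles it. On each of the other two pieces the symbol interacts with the singular lines, so on a dyadic annulus of parameter $k$ we smoothly restrict $m$ and expand it as a double Fourier series $\sum_{n,n_1} C^k_{n,n_1}\, e^{2\pi i(n\xi + n_1\xi_1)/2^k}$. Standard discretization expresses each term as a superposition of discrete bilinear models of the form (\ref{11}) with shift parameters determined by $(n,n_1)$. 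Theorem \ref{discrete} applies in the full range $1<p,q<\infty$ and $0<r<\infty$, yielding a bound $O(\log\langle\n_1\rangle\log\langle\n_2\rangle)$ for each model. Provided the Fourier coefficients $C^k_{n,n_1}$ decay at least quadratically in both indices, uniformly in $k$, the sum in $n,n_1$ converges and produces the required $L^p\times L^q\to L^r$ estimate for all $1<p,q<\infty$ with $1/p+1/q=1/r$ (the constraint $r>1/2$ is then automatic); the endpoints $p=\infty$ or $q=\infty$ are obtained by duality exactly as in the passage from (\ref{10}) to (\ref{6}).

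The main obstacle is precisely this analogue of Lemma \ref{coef} for the product symbol. Iterated $\partial_\xi$ and $\partial_{\xi_1}$ derivatives must now be distributed between $m_a$ and $m_b$ via the Leibnitz rule. The ``pure'' terms $m_a''\, m_b$ and $m_a\, m_b''$ are tractable by analogues of Lemma \ref{derivative}(a)-(b) for each factor: a second derivative of $m_c$ collapses to $\delta$-functions along its two singular lines, reducing the double integral to a one-dimensional one that can then be integrated by parts in the other variable. The genuinely new ingredient is the cross term $\partial_\xi m_a\cdot \partial_\xi m_b$: each $\partial_\xi m_c$ is bounded (by $1/|c\xi_1|$, harmless on a dyadic annulus) and piecewise constant in $\xi$, so the product is piecewise constant and one further integration by parts produces further $\delta$-contributions along the singular lines of either factor, evaluable via the same formulas. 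Tracking the at most three pairwise collisions of singular lines and summing the resulting boundary integrals yields the quadratic decay in $\langle n\rangle$ and $\langle n_1\rangle$ needed to absorb the logarithmic growth from Theorem \ref{discrete}.
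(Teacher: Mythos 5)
Your overall strategy is right and the symbol computation, the Littlewood--Paley split, and the reduction to Theorem \ref{discrete} all match the paper. But you take a genuinely different, and harder, route at the one place the paper is careful to avoid it: you expand the \emph{product} symbol $m_a m_b$ as a single double Fourier series on each Whitney square and then try to re-prove an analogue of Lemma \ref{coef} for the product's coefficients, distributing $\partial_\xi, \partial_{\xi_1}$ between $m_a$ and $m_b$ by Leibniz and worrying about the cross term $\partial_\xi m_a \cdot \partial_\xi m_b$. The paper instead expands \emph{each factor separately} as its own double Fourier series, which is the whole point of the phrase ``each factor satisfies the same quadratic estimates.'' Concretely, after smoothly restricting to a Whitney square one writes $m_a = \sum_{n,n_1} C^k_{n,n_1} e^{2\pi i(n\xi+n_1\xi_1)/2^k}$ and $m_b = \sum_{n',n'_1} D^k_{n',n'_1} e^{2\pi i(n'\xi+n'_1\xi_1)/2^k}$, so that Lemma \ref{coef} applies verbatim to $C$ and to $D$ with no Leibniz analysis whatsoever. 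The product of the two series then produces discrete models of the form \eqref{11} with shift parameters $n+n'$ and $n_1+n'_1$, hence a bound $O(\log\langle n+n'\rangle\log\langle n_1+n'_1\rangle)$ for each term, and the final sum is
$\sum_{n,n_1,n',n'_1} |C^k_{n,n_1}|^r |D^k_{n',n'_1}|^r \bigl(\log\langle n+n'\rangle \log\langle n_1+n'_1\rangle\bigr)^r$
(after the quasi-triangle inequality $\|\sum_j T_j\|_r^r \le \sum_j \|T_j\|_r^r$ for $r<1$). With quadratic decay of $C$ and $D$ this converges exactly when $2r>1$, which is where the threshold $r>1/2$ actually comes from --- it is not ``automatic'' from $1<p,q<\infty$, as you suggest, but is precisely the point at which $\sum 1/\langle n\rangle^{2r}$ begins to converge. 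Your Leibniz/cross-term sketch is not wrong and would probably yield a correct lemma, but it is considerably more delicate than what is needed (you would have to deal with products of jump discontinuities and with collisions of three distinct singular lines $\xi=0$, $\xi+a\xi_1=0$, $\xi+b\xi_1=0$), and the paper's factored decomposition bypasses it entirely while making the $r>1/2$ endpoint transparent.
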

To prove this theorem, one applies the same method described earlier for the first commutator. One just has to observe that the symbol of this operator is given by

$$
\left(\int_0^1 {\rm sgn} (\xi + \alpha  a \xi_1) d \alpha\right)
\left(\int_0^1 {\rm sgn} (\xi + \alpha  b \xi_1) d \alpha\right)
$$
and after that to realize that {\it each factor} satisfies the same desired {\it quadratic estimates}. So this time one needs to decompose each factor as a double Fourier series as we did before. 
The fact that one can go all the way down to $1/2$ with the estimates, is a simple
consequence of the statement that series of type

$$\sum_{n_1, n_2\in \Z} |C(n_1, n_1)|^r \log <n_1> \log <n_2>$$
are always convergent as long as the constants $C(n_1, n_2)$ decay at least quadratically in $n_1$ and $n_2$ and $r > 1/2$. The details are straightforward and are left to the reader. And clearly, one can generalize the above theorem even further, in the most obvious way. We will come back to this in the second paper of the sequel.

Another generalization we have in mind comes from the following identity

\begin{equation}\label{identity}
A' B' = (AB)'' - (B A')' - (A B')' + A' B'.
\end{equation}
As a consequence of it, the right hand side of \eqref{identity} satisfies H\"{o}lder estimates of type

$$\|(AB)'' - (B A')' - (A B')' + A' B' \|_r \lesssim \|A'\|_p \|B'\|_q$$
for indices $p, q, r$ as before. {\it Does this inequality continue to hold if one replaces every derivative $D$ by its modulus $|D|$ ?}
As before, a direct calculation shows that the new expression

$$|D|^2 (AB) - |D|( B |D| A) - |D|(A |D| B) + (|D| A) (|D| B)$$
can be rewritten as

\begin{equation}\label{circular1}
 p.v. \int_{\R^2}\left (\frac{\Delta_t}{t} A(x+s)\right)
\left (\frac{\Delta_s}{s} B(x+t)\right) 
\frac{ d t}{t} \frac{ d s}{s}.
\end{equation}
The right way to look at this formula is to view it as a bilinear operator in $A'$ and $B'$. Its symbol can be calculated quite easily and it is given by

\begin{equation}
\left(\int_0^1 {\rm sgn} (\xi_1 + \alpha \xi_2) d \alpha\right)
\left(\int_0^1 {\rm sgn} (\xi_2 + \beta \xi_1) d \beta\right)
\end{equation}
which is a symmetric function in the variables $\xi_1$ and $\xi_2$. Because of this symmetry we like to call  expressions such as the ones in (\ref{circular1}) {\it circular commutators}. We will return to them in the second paper of the sequel.

\begin{theorem}\label{gen33}
Let $a\neq 0$ and $b\neq 0$ and consider the expression

$$
 p.v. \int_{\R^2}\left (\frac{\Delta_{ a t}}{t} A(x+s)\right)
\left (\frac{\Delta_{b s}}{s} B(x+t)\right) 
\frac{ d t}{t} \frac{ d s}{s}.
$$

Viewed as a bilinear operator in $A'$ and $B'$, it extends naturally as a bounded operator from $L^p\times L^q$ into $L^r$ for every
$1<p, q \leq \infty$ with $1/p+1/q=1/r$ and $1/2 < r <\infty$.
\end{theorem}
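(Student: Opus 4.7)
The plan is to apply the strategy of Section 2 twice, once to each factor of the product symbol. As computed in the paragraph preceding Theorem \ref{gen33}, the symbol of the operator is
$$m_{a,b}(\xi_1,\xi_2) = \left(\int_0^1 \sgn(\xi_1+\alpha a\xi_2)\,d\alpha\right)\left(\int_0^1 \sgn(\xi_2+\beta b\xi_1)\,d\beta\right),$$
a product of two first-commutator type symbols. The first factor is smooth away from the lines $\xi_1=0$ and $\xi_1+a\xi_2=0$, the second away from $\xi_2=0$ and $\xi_2+b\xi_1=0$; modulo replacing each $\sgn$ by $1_{\R_+}$ (whose remainders are products handled by H\"older), the singularities of $m_{a,b}$ lie on these three lines and the machinery of Section 2 applies to each factor separately. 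I first reduce by duality exactly as in \eqref{9}--\eqref{10}, passing to the second adjoint $T^{*2}$; in this form the endpoint $q=\infty$ is absorbed, since after dualizing the $L^{\infty}$ input becomes an $L^{p'}$ test function and one only needs an $L^{p}\times L^{r'}\to L^1$ bound on $T^{*2}$, with $p,r'\in(1,\infty)$, which is in the range covered by Theorem \ref{discrete}. For $r<1$, duality is unavailable but $q<\infty$ automatically, so Theorem \ref{discrete} is applied directly.

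Next, perform two independent Littlewood--Paley decompositions in $\xi_1$ and $\xi_2$, splitting the frequency plane into the three Whitney-type regions $k_1\ll k_2$, $k_1\sim k_2$, $k_1\gg k_2$ as in \eqref{13}--\eqref{15}. On each Whitney piece at scale $2^k$, expand \emph{each} of the two factors of $m_{a,b}$ as a double Fourier series in $(\xi_1,\xi_2)$ in the spirit of \eqref{16}--\eqref{17}. Lemma \ref{coef}, applied once to each factor with the appropriate renaming of variables (and with $a,b$ absorbed harmlessly into the frequency variable), produces two families of coefficients $C^{k,1}_{n_1,n_1'}$ and $C^{k,2}_{n_2,n_2'}$ enjoying at least quadratic decay in each of their indices, uniformly in $k$. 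Multiplying the two series expresses the full symbol as a fourfold Fourier series over $(n_1,n_1',n_2,n_2')$ with coefficients dominated by the product of the two individual ones.

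Discretizing each scale exactly as in \eqref{18}, each quadruple $(n_1,n_1',n_2,n_2')$ contributes a discrete bilinear model of the form \eqref{11}, with shift parameters $\n_1,\n_2$ that are linear combinations of $(n_1,n_1',n_2,n_2')$; the requirement that at least two of the three bump families are of $\psi$-type is inherited from the analogous fact in Section 2 for each factor. Theorem \ref{discrete} then bounds each such model by $O(\log\langle\n_1\rangle\log\langle\n_2\rangle)$ uniformly in $k$, and using $\log\langle m+n\rangle\leq \log\langle m\rangle+\log\langle n\rangle$ this is majorized by $O\!\left(\log\langle n_1\rangle\log\langle n_1'\rangle\log\langle n_2\rangle\log\langle n_2'\rangle\right)$. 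Summing in $k\in\Z$ and for $r\geq 1$ using the triangle inequality reduces the matter to the convergence of
$$\sum_{n_1,n_1',n_2,n_2'}\frac{\log\langle n_1\rangle\log\langle n_1'\rangle\log\langle n_2\rangle\log\langle n_2'\rangle}{\langle n_1\rangle^{2}\langle n_1'\rangle^{2}\langle n_2\rangle^{2}\langle n_2'\rangle^{2}},$$
which is immediate from the quadratic decay supplied by Lemma \ref{coef}.

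The main obstacle is the quasi-Banach range $1/2<r<1$, where the triangle inequality fails. Here I would invoke the $r$-subadditivity $\|\sum_j F_j\|_r^{r}\leq \sum_j \|F_j\|_r^{r}$ to pull the four-fold sum inside the $L^r$ quasi-norm; the convergence question becomes whether
$$\sum_{n_1,n_1',n_2,n_2'}\frac{(\log\langle n_1\rangle)^r(\log\langle n_1'\rangle)^r(\log\langle n_2\rangle)^r(\log\langle n_2'\rangle)^r}{\langle n_1\rangle^{2r}\langle n_1'\rangle^{2r}\langle n_2\rangle^{2r}\langle n_2'\rangle^{2r}}$$
is finite, and this holds precisely when $2r>1$, i.e.\ $r>1/2$, which explains the sharp range stated in the theorem. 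The remaining verifications -- the explicit double Fourier expansion in each of the three Whitney regions, the bookkeeping of the shift parameters, and the check that the discretized bumps appearing in each model are $L^2$-normalized and of the correct $\phi$ or $\psi$ type -- are entirely parallel to those already carried out in Section 2 and are left to the routine adaptation.
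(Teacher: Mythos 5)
Your proposal is correct and follows essentially the same route as the paper: the paper likewise observes that the symbol is a product of two first-commutator-type factors, expands each factor as a double Fourier series with quadratically decaying coefficients (Lemma \ref{coef}), applies Theorem \ref{discrete} to each resulting shifted model, and obtains the range $r>1/2$ from the convergence of $\sum |C(n_1,n_2)|^r \log\langle n_1\rangle\log\langle n_2\rangle$ under quadratic decay. The details you supply (duality for $q=\infty$, $r$-subadditivity in the quasi-Banach range, additivity of the logarithms over the combined shifts) are exactly the routine adaptations the paper leaves to the reader.
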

The proof uses the same method, since it is not difficult to see that the symbols of such bilinear operators are again products of symbols of the first commutator kind and they each satisfy the same {\it quadratic estimates}.

\end{document}